\theoremstyle{definition}
\DeclareMathOperator{\tr}{tr}
\DeclareMathOperator{\Vol}{Vol}
\DeclareMathOperator{\dvol}{dvol}
\DeclareMathOperator{\Ric}{Ric}
\DeclareMathOperator{\Rm}{Rm}
\DeclareMathOperator{\End}{End}
\DeclareMathOperator{\Sym}{Sym}
\DeclareMathOperator{\tf}{tf}
\DeclareMathOperator{\rank}{rank}
\newcommand{\oJ}{\overline{J}}
\newcommand{\oP}{\overline{P}}
\newcommand{\oX}{\overline{X}}
\newcommand{\og}{\overline{g}}
\newcommand{\odelta}{\overline{\delta}}
\newcommand{\oDelta}{\overline{\Delta}}
\newcommand{\onabla}{\overline{\nabla}}
\newcommand{\hg}{\widehat{g}}
\newcommand{\lp}{\langle}
\newcommand{\rp}{\rangle}
\newcommand{\lv}{\lvert}
\newcommand{\rv}{\rvert}
\newcommand{\mB}{\mathcal{B}}
\newcommand{\mC}{\mathcal{C}}
\newcommand{\mE}{\mathcal{E}}
\newcommand{\mF}{\mathcal{F}}
\newcommand{\mL}{\mathcal{L}}
\newcommand{\mQ}{\mathcal{Q}}
\newcommand{\mS}{\mathcal{S}}
\newcommand{\kC}{\mathfrak{C}}
\newcommand{\bN}{\mathbb{N}}
\newcommand{\bR}{\mathbb{R}}
\newcommand{\suchthat}{\mathrel{}\middle|\mathrel{}}
\def\sideremark#1{\ifvmode\leavevmode\fi\vadjust{\vbox to0pt{\vss
 \hbox to 0pt{\hskip\hsize\hskip1em
 \vbox{\hsize3cm\tiny\raggedright\pretolerance10000
 \noindent #1\hfill}\hss}\vbox to8pt{\vfil}\vss}}}
\newcommand{\comment}[1]{}
\newtheorem{thm}{Theorem}[section]
\newtheorem{prop}[thm]{Proposition}
\newtheorem{lem}[thm]{Lemma}
\newtheorem{cor}[thm]{Corollary}
\theoremstyle{definition}
\theoremstyle{remark}
\newtheorem{remark}[thm]{Remark}
\numberwithin{equation}{section}
\begin{document}

\title{Boundary operators associated to the $\sigma_k$-curvature}
\author{Jeffrey S. Case}
\address{109 McAllister Building \\ Penn State University \\ University Park, PA 16802}
\email{jscase@psu.edu}
\author{Yi Wang}
\thanks{YW was partially supported by NSF Grant No.\ DMS-1612015}
\address{3400 N. Charles St. \\ 216 Krieger Hall \\ Mathematics Department \\ Baltimore, MD 21218}
\email{ywang@math.jhu.edu}
\keywords{conformally covariant operator; boundary operator; $\sigma_k$-curvature; Sobolev trace inequality; fully nonlinear PDE}
\begin{abstract}
 We study conformal deformation problems on manifolds with boundary which include prescribing $\sigma_k\equiv0$ in the interior.  In particular, we prove a Dirichlet principle when the induced metric on the boundary is fixed and an Obata-type theorem on the upper hemisphere.  We introduce some conformally covariant multilinear operators as a key technical tool.
\end{abstract}
\maketitle

\section{Introduction}
\label{sec:intro}

Let $(X^{n+1},g_0)$ be a compact Riemannian manifold.  The Ricci decomposition
\[ \Rm = W + P\wedge g_0 \]
of the Riemann curvature tensor $\Rm$ into the conformally invariant Weyl curvature $W$ and the Kulkarni--Nomizu product of the Schouten tensor
\[ P = \frac{1}{n-1}\left(\Ric - \frac{R}{2n}g_0\right) \]
and the metric $g$ implies that the behavior of the full Riemann curvature tensor under conformal deformation is completely controlled by the Schouten tensor.  For this reason, Viaclovsky initiated~\cite{Viaclovsky2000} the study of the conformal properties of the \emph{$\sigma_k$-curvatures} $\sigma_k$; i.e.\ the $k$-th elementary symmetric functions of the Schouten tensor.  Note that $\sigma_1=R/2n$ is proportional to the scalar curvature.  Crucially, the $\sigma_k$-curvatures are variational if and only if $k\leq 2$ or $g$ is locally conformally flat~\cite{BransonGover2008,Viaclovsky2000}.  In particular, if $k\leq2$ or $g$ is locally conformally flat, then the total $\sigma_k$-curvature functional, $\mF(g):=\int_X \sigma_kg\,\dvol_g$, is such that
\begin{equation}
 \label{eqn:closed_variational}
 \left.\frac{d}{dt}\right|_{t=0}\mF_k\left(e^{2t\Upsilon}g\right) = (n+1-2k)\int_X \Upsilon\sigma_k^g\,\dvol_g
\end{equation}
for all metrics $g$ in the conformal class $[g_0]$ of $g_0$ and all $\Upsilon\in C_0^\infty(X)$.  Equation~\eqref{eqn:closed_variational} realizes $\sigma_k$ as the conformal gradient of $\mF_k$ when $n+1\not=2k$; Brendle and Viaclovsky found~\cite{BrendleViaclovsky2004} a different functional with conformal gradient $\sigma_k$ when $n+1=2k$.

When $X^{n+1}$ has nonempty boundary $M^n$, S.\ Chen introduced~\cite{Chen2009s} the $H_k$-curvatures as a family of invariants on $M$ which are polynomial in the restriction $P\rv_{TM}$ of the Schouten tensor to $M$ and the second fundamental form $A$ of $M$.  For example, $H_1=\frac{1}{n}\tr A$ is the mean curvature; see Section~\ref{sec:bg} for the general formula for $H_k$.  A key property of the $H_k$-curvatures is that they enable the study of conformal deformations of the $\sigma_k$-curvature on manifolds with boundary by variational methods: If $k\leq 2$ or $g_0$ is locally conformally flat, then the functional
\[ \mS_k(g) := \int_X \sigma_k^g\,\dvol_g + \oint_M H_k^g\,\dvol_{i^\ast g}, \]
where $i\colon M\to X$ is the inclusion mapping, satisfies
\begin{equation}
 \label{eqn:general_variation}
 \left.\frac{d}{dt}\right|_{t=0}\mS_k\left(e^{2t\Upsilon}g\right) = (n+1-2k)\left[ \int_X \Upsilon\sigma_k^g\,\dvol_g + \oint_M \Upsilon H_k^g\,\dvol_{i^\ast g} \right]
\end{equation}
for all $g\in[g_0]$ and all compactly-supported $\Upsilon\in C^\infty(X)$.  Equation~\eqref{eqn:general_variation} realizes $(\sigma_k,H_k)$ as the conformal gradient of $\mS_k$ when $n+1\not=2k$; we provide in Proposition~\ref{prop:boundary_functional} below a different functional with conformal gradient $(\sigma_k,H_k)$ when $n+1=2k$.

The results of this article provide existence and uniqueness results for certain problems involving conformal deformations of a compact Riemannian manifold $(X^{n+1},g_0)$ with nonempty boundary to a Riemannian manifold for which $\sigma_k^g$ vanishes identically.  Our results use variational methods, and hence we always assume that $k\leq2$ or $g_0$ is locally conformally flat.  In order to use elliptic methods, we always restrict our attention to the $C^{1,1}$-closures of the cones
\[ \Gamma_k^+ = \left\{ g\in[g_0] \suchthat \sigma_j^g>0 \text{ for all $1\leq j\leq k$} \right\} . \]
A key point is that $\sigma_k^g$ is elliptic (resp.\ degenerate elliptic) in the cone $\Gamma_k^+$ (resp.\ $\overline{\Gamma_k^+}$).

Our first main result is the following Dirichlet principle for our deformation problem:

\begin{thm}
 \label{thm:intro/inequality}
 Let $k\in\bN$ and let $(X^{n+1},g_0)$ be a compact Riemannian manifold with nonempty boundary $M=\partial X$ such that $n+1\not=2k$ and $g_0\in\Gamma_k^+$.  If $k\geq3$, assume additionally that $g_0$ is locally conformally flat.  Set
 \begin{equation}
  \label{eqn:intro/dirichlet}
  \mC_k = \left\{ u\in C^\infty(X) \suchthat g_u:=e^{2u}g_0\in\Gamma_k^+, u\rv_M\equiv0 \right\}
 \end{equation}
 and let $\overline{\mC_k}$ be the $C^{1,1}$-closure of $\mC_k$.  Then there is a unique $u_0\in\overline{\mC_k}$ such that $\sigma_k^{g_{u_0}}\equiv0$.  Moreover,
 \begin{equation}
  \label{eqn:intro/inequality}
  \mS_k(g_u) \geq \mS_k(g_{u_0})
 \end{equation}
 for all $u\in\overline{\mC_k}$ with equality if and only if $u=u_0$.
\end{thm}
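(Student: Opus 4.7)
The proof decomposes into existence of $u_0$, the variational inequality, and uniqueness. For existence, the plan is to solve the fully nonlinear degenerate elliptic Dirichlet problem $\sigma_k^{g_u}\equiv 0$ in $X$ with $u|_M=0$ and $g_u\in\overline{\Gamma_k^+}$ by the standard approximation scheme: regularize to the strictly elliptic $\sigma_k^{g_u}=\varepsilon>0$, invoke the existence theory for Dirichlet problems in the admissible cone (Guan and collaborators for $k\le 2$; under the local conformal flatness hypothesis for $k\ge 3$), derive uniform $C^{1,1}$ estimates, and pass to $\varepsilon\downarrow 0$ to obtain $u_0\in\overline{\mC_k}$.

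For the inequality, the plan is to use the conformally covariant multilinear boundary operators of the paper to rewrite $\mS_k$ on the Dirichlet slice $\mC_k$ in a form that makes the minimization at $u_0$ manifest. Concretely, one seeks an identity of the form
\[
\mS_k(g_u) - \mS_k(g_{u_0}) = \int_X Q(\eta,\eta)\,d\mu,\qquad \eta:=u-u_0,
\]
where $Q$ is a positive semidefinite quadratic form built along some path $u_t$ in $\overline{\mC_k}$ joining $u_0$ to $u$ using the Newton tensor $T_{k-1}$ of the Schouten tensor ($T_{k-1}\ge 0$ on $\overline{\Gamma_k^+}$, strictly positive on $\Gamma_k^+$), whose boundary contributions from integration by parts vanish thanks to the Dirichlet condition $\eta|_M=0$. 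Strict positivity of $Q$ on $\Gamma_k^+$, combined with a strong maximum principle for the degenerate $\sigma_k$-equation, forces equality only when $\eta\equiv 0$, giving uniqueness; an approximation argument then extends the inequality from $\mC_k$ to the full $C^{1,1}$-closure $\overline{\mC_k}$.

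The main obstacle is producing the decomposition $Q$ with the correct sign uniformly in the sign of the scaling factor $n+1-2k$. A naive choice --- the Euclidean straight-line path $u_t=(1-t)u_0+tu$ --- yields, for $\phi(t):=\mS_k(g_{u_t})$, a second variation
\[
\phi''(0)=(n+1-2k)\int_X T_{k-1}(\nabla\eta,\nabla\eta)\,\dvol_{g_{u_0}}
\]
whose sign matches $n+1-2k$ and can therefore be negative (as when $k=2$ and $n+1=3$); in particular $u_0$ is not automatically a Euclidean local minimum of $\mS_k$. The multilinear boundary operators must therefore supply a more geometric path or integration-by-parts identity --- in the spirit of Mabuchi--Donaldson--Semmes geodesic convexity for the K\"ahler--Einstein problem, or the Gursky--Streets convexity developed for the $\sigma_k$-functional in the conformally critical dimension $n+1=2k$ --- along which $\mS_k$ is manifestly convex regardless of the sign of $n+1-2k$. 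Furnishing such an identity is the substantive technical content expected from the paper's boundary operator machinery; once it is in hand, the rest of the argument (coercivity, direct method, regularity of the minimizer, and the degenerate maximum principle for strict equality) follows standard lines.
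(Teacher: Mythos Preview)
Your structural outline (existence by approximation to the nondegenerate problem, convexity argument, uniqueness by comparison principle) matches the paper, and your diagnosis of the sign obstruction along the Euclidean path $u_t=(1-t)u_0+tu$ in the exponential parametrization $g_u=e^{2u}g_0$ is correct. But you have not supplied the actual mechanism that cures it, and your speculation that a Mabuchi--Donaldson--Semmes or Gursky--Streets type geodesic is needed is off the mark.

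The paper's fix is much simpler: a change of dependent variable. Write $g=v^{4k/(N-2k)}g_0$, so that $v=e^{\frac{N-2k}{2k}u}$. The functionals $L_k$ and $B_k$ are designed so that
\[
L_k(v)=\Bigl(\tfrac{N-2k}{2k}v\Bigr)^{2k-1}v^{\frac{4k^2}{N-2k}}\sigma_k^{g_v},\qquad
B_k(v)=\Bigl(\tfrac{N-2k}{2k}v\Bigr)^{2k-1}v^{\frac{2k(2k-1)}{N-2k}}H_k^{g_v}
\]
are homogeneous \emph{polynomials} of degree $2k-1$ in the two-jet of $v$, and hence polarize to symmetric multilinear operators. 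The associated energy $\mE_k(v)=\mQ_k(v,\dotsc,v)$ is then a degree-$2k$ polynomial in $v$ with $c_{n,k}\mS_k(g_v)=\mE_k(v)$. Along the \emph{affine} path $v_t=v_0+tw$ in the $v$-variable one gets
\[
\frac{d^2}{dt^2}\mE_k(v_t)=2k(2k-1)\,\mQ_k(w,w,v_t,\dotsc,v_t),
\]
which is $\ge 0$ for any $v_t$ with $g_{v_t}\in\overline{\Gamma_k^+}$ and any $w$ vanishing on $M$, by the paper's Corollary showing $\mQ_k(w,w,v,\dotsc,v)\ge 0$ (this is just $\int T_{k-1}^{g_v}(\nabla(w/v),\nabla(w/v))\dvol_{g_v}$ after using conformal covariance and the Dirichlet condition). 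Since the admissible class in the $v$-variable is convex (convexity of $\overline{\Gamma_k^+}$), this yields global convexity of $\mE_k$ on it, and the critical point $v_0$ corresponding to $\sigma_k=0$ is the minimizer. No exotic path is required; the ``geodesic'' you are looking for is the straight line in $v$, not in $u$. The factor $(N-2k)$ that troubled you is absorbed into $c_{n,k}=\bigl(\tfrac{N-2k}{2k}\bigr)^{2k-1}$ relating $\mS_k$ and $\mE_k$.

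Your uniqueness sketch via a comparison principle for the linearized operator $\int_0^1 T_{k-1}(g^{-1}P_t)\,dt$ is exactly what the paper does.
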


Our proof of Theorem~\ref{thm:intro/inequality} proceeds by finding conformally covariant $(2k-1)$-linear operators $L_k\colon\left(C^\infty(X)\right)^{2k-1}\to C^\infty(X)$ and $B_k\colon\left(C^\infty(X)\right)^{2k-1}\to C^\infty(M)$ such that
\begin{enumerate}
 \item $L_k(1,\dotsc,1)=c_{n,k}\sigma_k$ for an explicit constant $c_{n,k}$ which depends only on $n$ and $k$ and vanishes if and only if $n+1=2k$;
 \item $B_k(1,\dotsc,1)=c_{n,k}H_k$ for $c_{n,k}$ the same constant as above; and
 \item the functional
 \[ \mQ_k(u_1,\dotsc,u_{2k}) := \int_X u_1\,L_k(u_2,\dotsc,u_{2k})\,\dvol_g + \oint_M u_1\,B_k(u_2,\dotsc,u_{2k})\,\dvol_{i^\ast g} \]
 is symmetric in $(u_1,\dotsc,u_{2k})\in\left(C^\infty(X)\right)^{2k}$.
\end{enumerate}
In the special case $k=1$, we recover the conformal Laplacian as $L_1$ and the conformal Robin operator as $B_1$ (cf.\ \cite{Escobar1988,Escobar1992a}).  In general, the above properties allow us to rewrite the functional $\mS$ on $[g_0]$ in terms of the ``energy functional''
\[ \mE_k(u) := \int_X u\,L_k(u,\dotsc,u)\,\dvol_g + \oint_M u\,B_k(u,\dotsc,u)\,\dvol_{i^\ast g} . \]
via the identity
\[ c_{n,k}\mS_k\left(u^{\frac{4k}{n+1-2k}}g_0\right) = \mE_k(u) . \]
In this formulation, Equation~\eqref{eqn:intro/inequality} is equivalent to the Dirichlet-type principle
\begin{equation}
 \label{eqn:intro/inequality_function}
 \mE_k(u) \geq \mE_k(u_0)
\end{equation}
for all $u\in\overline{\mC_k}$ with equality if and only if $u=u_0$.  The proof of~\eqref{eqn:intro/inequality_function}, and hence~\eqref{eqn:intro/inequality}, uses the symmetry of $\mQ_k$, and is completely analogous to the corresponding Dirichlet principle for the $k$-Hessian equation~\cite{CaseWang2016}.

Theorem~\ref{thm:intro/inequality} and its reformulation as the functional inequality~\eqref{eqn:intro/inequality_function} suggests an interesting new construction of fully nonlinear nonlocal conformally covariant operators.  Specifically, under the hypotheses of Theorem~\ref{thm:intro/inequality}, the generalized Dirichlet-to-Neumann operator $\mB_k(f) := B_k(u_f)$ is well-defined, where $u_f$ is the unique solution of
\[ \begin{cases}
    \sigma_k^{u^{\frac{4k}{n+1-2k}}g} = 0, & \text{in $X$}, \\
    u = f, & \text{on $M$} ,
   \end{cases} \]
under suitable hypotheses on the domain.  In this form, \eqref{eqn:intro/inequality_function} becomes the Sobolev trace-type inequality
\begin{equation}
 \label{eqn:intro/sobolev_trace}
 \mE_k(u) \geq \oint_M f\,\mB_k(f)\,\dvol_{i^\ast g}
\end{equation}
for all suitable extensions of $f$.  Indeed, Guillarmou and Guillop\'e showed~\cite{GuillarmouGuillope2007} that $\mB_1$ is the fractional GJMS operator of order $1$, as defined by Graham and Zworski~\cite{GrahamZworski2003}, whence~\eqref{eqn:intro/sobolev_trace} essentially provides a norm computation for the Sobolev trace embedding $H^1(X)\subset H^{1/2}(M)$.  For this reason, we regard~\eqref{eqn:intro/sobolev_trace} as a norm computation for part of the Sobolev trace embedding $W^{1,2k}(X) \subset W^{1-1/2k,2k}(M)$.

The Sobolev embedding $W^{1-1/2k,2k}(M^n)\subset L^{\frac{2nk}{n+1-2k}}(M^n)$ and the previous paragraph together suggest that the following sharp Sobolev trace-type inequality should be true: If $n+1>2k$, then for any metric $g\in\overline{\Gamma_k^+}$ conformal to the round metric $d\theta_+^2$ on the upper hemisphere $S_+^{n+1}$, it holds that
\begin{equation}
 \label{eqn:intro/sobolev_conjecture}
 \int_{S_+^{n+1}} \sigma_k^g\,\dvol_g + \oint_{S^n} H_k^g\,\dvol_{i^\ast g} \geq C_{n,k}\left(\Vol_g(S^n)\right)^{\frac{n+1-2k}{n}}
\end{equation}
with equality if and only if $g$ is flat, where $C_{n,k}>0$ is an explicitly computable constant.  The validity of~\eqref{eqn:intro/sobolev_conjecture} is further substantiated by the sharp Sobolev-type inequalities involving the $\sigma_k$-curvature on round spheres proven by Guan and Wang~\cite{GuanWang2004}.

As a step towards proving the validity of~\eqref{eqn:intro/sobolev_conjecture}, we establish the following partial classification of the critical points of the functional
\[ g \mapsto \mS_k(g)\Vol_g(M)^{-\frac{n+1-2k}{n}} , \]
or equivalently, metrics for which $\sigma_k^g\equiv0$ and $H_k^g$ is constant.

\begin{thm}
 \label{thm:intro/obata}
 Let $\Gamma_k^+$ be the positive $k$-cone on the round $(n+1)$-dimensional hemisphere $S_+^{n+1}$.  Suppose that $g\in\overline{\Gamma_k^+}$ is such that $\sigma_k^g\equiv0$ and $H_k^g$ is a positive constant along $S^n=\partial S_+^{n+1}$.  Suppose additionally that
 \begin{equation}
  \label{eqn:intro/obata_assumption}
  \sup_{S^n} H^g \leq (k+1)\inf_{S^n} H^g .
 \end{equation}
 Then $g$ is flat and $g\rv_{S^n}$ is round.
\end{thm}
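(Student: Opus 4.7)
The plan is to pass to the flat Euclidean ball model and apply a Pohozaev-type integral identity built from the divergence-free property of the Newton transformation of the Schouten tensor.  Via the standard conformal equivalence $(S_+^{n+1}, d\theta_+^2)\cong(\oB^{n+1}, g_E)$, write the metric under study as $g = v^{-2}g_E$ on $B^{n+1}$ for a positive $v \in C^{1,1}(\oB^{n+1})$; a direct computation gives $P_g = v^{-2}M_v$, where $M_v := v\,\nabla^2 v - \tfrac{1}{2}|\nabla v|^2 g_E$, so the interior hypothesis $\sigma_k^g \equiv 0$ becomes the degenerate fully nonlinear equation $\sigma_k(M_v) \equiv 0$ in $B^{n+1}$.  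Flat metrics conformal to $d\theta_+^2$ correspond to a finite-dimensional family of quadratic polynomials $v$ whose coefficients are constrained by $M_v \equiv 0$, so the objective becomes to show every admissible $v$ lies in this family.

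The main step is the derivation of a Pohozaev-type identity.  Since either $k \leq 2$ or the ambient $g_E$ is locally conformally flat, the Newton transformation $T_{k-1}(P_g)$ is divergence-free.  Contracting $T_{k-1}(P_g)$ with the Lie derivative of $P_g$ along the dilation vector field $X = x$ on $B^{n+1}$ (a conformal Killing field of $g_E$) and integrating by parts yields an identity of the form
\[
 0 = c_{n,k}\int_{B^{n+1}} \sigma_k^g\,\dvol_g + \oint_{S^n} \Psi_k\,\dvol_{i^\ast g},
\]
where the boundary integrand $\Psi_k$ is an explicit polynomial in $H^g$, $H_k^g$, the traceless second fundamental form $\tracefree{A^g}$, and the traceless tangential Schouten $\tracefree{(P_g\rv_{TM})}$.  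The interior term vanishes by hypothesis.

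The next step exploits the Newton--Maclaurin inequalities inside the cone $\overline{\Gamma_k^+}$ together with the constancy of $H_k^g$ along $S^n$.  One rewrites $\Psi_k$ as a sum of a manifestly nonnegative quadratic in $\tracefree{A^g}$ and $\tracefree{(P_g\rv_{TM})}$ weighted by symmetric functions of $A^g$, plus a term proportional to $\bigl((k+1)H^g - \sup_{S^n} H^g\bigr)$ times another nonnegative symmetric function of $A^g$.  The pinching hypothesis~\eqref{eqn:intro/obata_assumption} makes the latter factor pointwise nonpositive, so vanishing of the boundary integral forces all the traceless tensors to vanish identically along $S^n$ and $(k+1)H^g \equiv \sup_{S^n} H^g$.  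Consequently $A^g$ is umbilic, $P_g\rv_{TM}$ is pure trace, and $H^g$ is constant along $S^n$.

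This boundary rigidity, combined with $\sigma_k(M_v) \equiv 0$ in the interior and the uniqueness content of Theorem~\ref{thm:intro/inequality} (applied with the now-determined boundary data), forces $v$ to coincide with a quadratic polynomial from the flat family, whence $g$ is flat and $g\rv_{S^n}$ is round.  The main obstacle is the algebraic factorization of $\Psi_k$: exhibiting the sharp constant $k+1$ in the pinching as the natural factor arising from the mixed-polynomial structure of $H_k^g$ in $A^g$ and $P_g\rv_{TM}$ via a careful application of Newton--Maclaurin inequalities.  A secondary, technical obstacle is justifying the integration by parts at the $C^{1,1}$-regularity of $\overline{\Gamma_k^+}$, which requires approximation by smooth metrics in the open cone together with stability of the identity under such approximation.
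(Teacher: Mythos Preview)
Your overall architecture---an integral identity coming from a conformal Killing field, Newton--Maclaurin inequalities to extract a sign, and the uniqueness theorem to close---matches the paper's. But the core step, your claimed factorization of the boundary integrand $\Psi_k$, is not a pointwise factorization at all, and this is where your plan diverges from what is actually needed.

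In the paper the argument runs on the sphere with $g=u^2 g_0$, and it uses \emph{two} independent identities rather than one. The first is the integration by parts
\[
-(k+1)\int_{S_+^{n+1}} u\,\sigma_{k+1}^g = \int_{S_+^{n+1}}\bigl\langle T_k^g,\nabla^2 u\bigr\rangle = \oint_{S^n} T_k^g(\nabla u,\eta),
\]
which uses that $T_k^g$ is divergence-free and the conformally Einstein relation $\tf(uP^g+\nabla^2 u)=0$. After evaluating the boundary term via the umbilic simplifications (Lemmas~\ref{lem:simplify_Hk}--\ref{lem:simplify_Tketa}) and the Newton inequality of Lemma~\ref{lem:signs}, this gives an inequality whose right side contains $(k+1)\oint Hu\,\sigma_{k,k}$. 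The second identity is the separate conformal Killing computation (Lemma~\ref{lem:cKilling})
\[
\oint_{S^n} u\,\sigma_{k,k}=0,
\]
which lets one subtract $y\oint u\,\sigma_{k,k}$ for \emph{any} constant $y$. Choosing $y\in[\sup H,\,(k+1)\inf H]$---this is precisely where the pinching is used---and applying Newton--Maclaurin once more produces a single integrand with the factor $((k+1)H-y)$ times a nonpositive quantity. So the constant $\sup H$ (your $\sup_{S^n}H^g$) does not appear by an algebraic factorization of $\Psi_k$; it is inserted by hand via an auxiliary integral identity. Your single-Pohozaev picture does not explain how a free constant enters, and without that mechanism the sign argument does not go through.

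Two smaller points. First, the trace-free second fundamental form $\tracefree{A^g}$ vanishes identically here, since umbilicity is a conformal invariant and $S^n\subset S_+^{n+1}$ is totally geodesic; so its appearance in your proposed quadratic is vacuous and signals that you have not yet used the specific geometry. Second, the equality analysis in the paper does not directly yield that $H^g$ is constant; it yields $\tf(P^g\rvert_{TM})=0$ (with a separate case analysis for $k=2$), from which Gauss--Codazzi forces $g\rvert_{S^n}$ to be Einstein, hence round, and only then does uniqueness (Theorem~\ref{thm:intro/inequality}) deliver flatness.
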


When $k=1$, the pinching assumption~\eqref{eqn:intro/obata_assumption} automatically holds since $H_1$ is the mean curvature $H$, so that Theorem~\ref{thm:intro/obata} recovers an Obata-type theorem of Escobar~\cite{Escobar1988}.  We expect that the pinching assumption~\eqref{eqn:intro/obata_assumption} is not necessary for general $k$.  Our proof of Theorem~\ref{thm:intro/obata} adapts insights from Escobar's proof as well as from Viaclovsky's proof~\cite{Viaclovsky2000} of a similar Obata-type theorem on $S^n$.

This article is organized as follows: In Section~\ref{sec:bg} we recall some basic facts about the $\sigma_k$- and $H_k$-curvatures, and provide a more refined understanding of their combined variational properties.  In Section~\ref{sec:boundary} we define the functionals $L_k$ and $B_k$, prove their key symmetry and conformal covariance properties, and establish a useful nonnegativity result for the functional $\mQ_k$.  In Section~\ref{sec:inequality} we prove Theorem~\ref{thm:intro/inequality}.  In Section~\ref{sec:obata} we prove Theorem~\ref{thm:intro/obata}.

\section{Background}
\label{sec:bg}

In this section we recall some background relating to elementary symmetric functions on matrices and on manifolds.  We are particularly interested in their convexity and variational properties.  On manifolds with boundary, one must introduce suitable invariants on the boundary to study such properties.  Such invariants, called the $H_k$-curvatures, were introduced by Chen~\cite{Chen2009s}, though their variational properties were not fully detailed.  Lemma~\ref{lem:conformal_variation} addresses this point by computing the conformal linearization of the $H_k$-curvature.  This enables us to specify a functional with conformal gradient $(\sigma_k;H_k)$ on compact Riemannian manifolds $(X,g)$ with boundary in the critical dimension $\dim X=2k$.

\subsection{Elementary symmetric functions of symmetric matrices}
\label{subsec:bg/sk}

For $k\in\bN$, the \emph{$k$-th elementary symmetric function} of a $d\times d$-symmetric matrix $B\in\Sym_d$ is
\[ \sigma_k(B) := \sum_{i_1<\dotsb<i_k} \lambda_{i_1}\dotsm\lambda_{i_k} , \]
where $\lambda_1,\dotsc,\lambda_d$ are the eigenvalues of $B$.  The $k$-th elementary symmetric function can also be computed without knowledge of the eigenvalues of $B$ via the formula
\begin{equation}
 \label{eqn:sym_sigmak}
 \sigma_k(B) = \frac{1}{k!}\delta_{i_1\dotsb i_k}^{j_1\dotsb j_k} B_{j_1}^{i_1} \dotsm B_{j_k}^{i_k} ,
\end{equation}
where $\delta_{i_1\dotsb i_k}^{j_1\dotsb j_k}$ denotes the generalized Kronecker delta,
\[ \delta_{i_1\dotsb i_k}^{j_1\dotsb j_k} = \begin{cases}
                                             1, & \text{if $(i_1,\dotsc,i_k)$ is an even permutation of $(j_1,\dotsc,j_k)$,} \\
                                             -1, & \text{if $(i_1,\dotsc,i_k)$ is an odd permutation of $(j_1,\dotsc,j_k)$,} \\
                                             0, & \text{otherwise,}
                                            \end{cases} \]
and Einstein summation convention is employed.  The \emph{$k$-th Newton tensor} of $B$ is
\begin{equation}
 \label{eqn:sym_Tk}
 T_k(B)_i^j = \frac{1}{k!}\delta_{ii_1\dotsb i_k}^{jj_1\dotsb j_k} B_{j_1}^{i_1} \dotsm B_{j_k}^{i_k} .
\end{equation}
It is clear from both~\eqref{eqn:sym_sigmak} and~\eqref{eqn:sym_Tk} that $\sigma_k(B)$ and $T_k(B)$ are homogeneous polynomials of degree $k$ in $B$.  In particular, we may polarize both $\sigma_k(B)$ and $T_k(B)$.  For our purposes, we require only the mixed symmetric functions and Newton tensors: Given nonnegative integers $k,\ell$ with $\ell\leq k$ and $d\times d$-symmetric matrices $B,C$, we define
\begin{align*}
 \sigma_{k,\ell}(B,C) & = \frac{1}{k!}\delta_{i_1\dotsb i_k}^{j_1\dotsb j_k} B_{j_1}^{i_1} \dotsm B_{j_\ell}^{i_\ell} C_{j_{\ell+1}}^{i_{\ell+1}} \dotsm C_{j_k}^{i_k} , \\
 T_{k,\ell}(B,C)_i^j & = \frac{1}{k!}\delta_{ii_1\dotsb i_k}^{jj_1\dotsb j_k} B_{j_1}^{i_1} \dotsm B_{j_\ell}^{i_\ell} C_{j_{\ell+1}}^{i_{\ell+1}} \dotsm C_{j_k}^{i_k} , \\
\end{align*}
That is, $\sigma_{k,\ell}(B,C)$ (resp.\ $T_{k,\ell}(B,C)$) is the polarization of $\sigma_k$ (resp.\ $T_k$) evaluated at $\ell$ factors of $B$ and $k-\ell$ factors of $C$.

The elementary symmetric functions and Newton tensors are particularly well-behaved within the G{\aa}rding cones.  For example, the \emph{positive $k$-cone} is
\[ \Gamma_k^+ := \left\{ B \in \Sym_n \suchthat \sigma_1(B),\dotsc,\sigma_k(B)>0 \right\} . \]
There are a number of important facts about elements of $\Gamma_k^+$, among them the following (see~\cite{CaffarelliNirenbergSpruck1985} for proofs):
\begin{enumerate}
 \item If $B\in\Gamma_k^+$, then $T_{k-1}(B)$ is positive definite.
 \item $\Gamma_k^+$ is convex.
\end{enumerate}

\subsection{$\sigma_k$-curvatures on manifolds with boundary}
\label{subsec:bg/mfd}

Let $(X^{d},g)$ be a Riemannian manifold.  The \emph{Schouten tensor} is the section of $S^2T^\ast X$ given by
\[ P := \frac{1}{d-2}\left(\Ric - Jg\right), \]
where $\Ric$ is the Ricci tensor of $g$ and $J=\tr_g P$.  Thus $J=\frac{R}{2(d-1)}$ for $R$ the scalar curvature of $g$.  We write $g^{-1}P$ as the section of $\End(TX)$ obtained by raising an index of the Schouten tensor using the metric $g$.  Given $k\in\bN$, we define the \emph{$\sigma_k$-curvature} $\sigma_k^g$ of $(X^{d},g)$ by
\[ \sigma_k^g := \sigma_k(g^{-1}P) . \]
More precisely, $\sigma_k^g\in C^\infty(X)$ is the function which assigns to each point $x\in X$ the $k$-th elementary symmetric function of the matrix $g_x^{-1}P_x\in\Sym_{d}$ obtained by evaluating the section $g^{-1}P$ at $x$ to produce a linear map on $T_xX\cong\bR^{d}$.  Similarly, the \emph{$k$-th Newton tensor} $T_k^g$ of $(X^d,g)$ is the section of $S^2T^\ast X$ determined by
\[ g^{-1}T_k^g = T_k(g^{-1}P) . \]
When a Riemannian manifold $(X^d,g)$ is clearly specified by context, we omit $g$ from our notation and simply write $\sigma_k$ and $T_k$ for the $\sigma_k$-curvature and $k$-th Newton tensor, respectively, of $(X,g)$.

Suppose now that $(X^{n+1},g)$ is a Riemannian manifold with non-empty boundary $M^n=\partial X$.  We set $\og:=i^\ast g$, where $i\colon M\to X$ is the inclusion map, and denote by $\eta_g$ the outward-pointing unit normal vector field with respect to $g$ along $M$.  The \emph{second fundamental form} $A$ is the section of $S^2T^\ast M$ defined by
\[ A(Y,Z) = g\left(\nabla_Y\eta, Z\right) \]
for all $Y,Z\in T_xM$.  In this setting, we always denote by $N:=n+1$ the dimension of $X$ and use bars to denote geometric invariant determined by the restriction of $g$ to the boundary; e.g.\ $\oP$ denotes the Schouten tensor of $\og$.

Given $k\in\bN$, we define the \emph{$H_k$-curvature of $M$} by
\begin{equation}
 \label{eqn:defn_Hk}
 H_k^g := \sum_{j=0}^{k-1} \frac{(2k-j-1)!(N-2k+j)!}{(N-k)!(2k-2j-1)!!j!}\sigma_{2k-j-1,j}\left(\og^{-1}i^\ast P,\og^{-1}A\right),
\end{equation}
where $\og^{-1}i^\ast P,\og^{-1}A$ are sections of $\End(TM)$ obtained by raising an index using the metric $\og$ on $M$, and $H_k^g\in C^\infty(M)$ is evaluated at points on $p\in M$ by evaluating the sections $\og^{-1}i^\ast P$ and $\og^{-1}A$ at $p\in M$ to produce a linear map on $T_pM\cong\bR^n$.  When a Riemannian manifold $(X^{n+1},g)$ with non-empty boundary is clearly specified by context, we omit $g$ from our notation and simply write $H_k$ for the $H_k$-curvature.  In this case, we also write $\sigma_{k,\ell}$ for $\sigma_{k,\ell}\left(\og^{-1}i^\ast P,\og^{-1}A\right)$ and write $T_{k,\ell}$ for $T_{k,\ell}\left(\og^{-1}i^\ast P,\og^{-1}A\right)$.

\begin{remark}
 Our definition~\eqref{eqn:defn_Hk} of the $H_k$-curvature is originally due to S.\ Chen~\cite{Chen2009s}.  We draw heavily from~\cite{Chen2009s} for facts involving the $H_k$-curvature, though with two differences of convention: In our paper, we use $n$ to denote the dimension of the boundary $M$, and we use $\eta$ to denote the outward-pointing unit normal vector field.  By comparison, Chen uses $n$ to denote the dimension of the interior $X$, and also uses $n$ to denote the inward-pointing unit normal.
\end{remark}

In the special case $k=1$, we compute that $H_1=\frac{1}{n}\tr A=:H$ is the \emph{mean curvature} $H$ of $M$.  When studying the $H_k$-curvature for larger values of $k$, the consequence
\begin{equation}
 \label{eqn:gauss_codazzi}
 i^\ast P = \oP - HA_0 - \frac{1}{2}H^2\og + \frac{1}{n-2}\left(W(\eta,\cdot,\eta,\cdot) + A_0^2 - \frac{1}{2(n-1)}\lv A_0\rv^2\og\right)
\end{equation}
of the Gauss--Codazzi equations will be useful (see~\cite[Lemma~2.1]{Case2015b}), where $A_0:=A-H\og$ is the trace-free part of the second fundamental form and $W$ is the Weyl tensor of $g$.  For example, up to adding multiples of the pointwise conformally invariant scalars $\tr A_0^3$ and $\lp W(\eta,\cdot,\eta,\cdot),A_0\rp$, it holds that
\[ H_2 = H\oJ - \frac{n}{6}H^3 - \frac{1}{n-1}\lp A_0,\oP\rp + \frac{1}{2(n-1)}H\lv A_0\rv^2 . \]

As shown by S.\ Chen~\cite{Chen2009s}, a key property of the $H_k$-curvature is that it acts as a boundary term for the $\sigma_k$-curvature, in that, under suitable hypotheses on the underlying conformal manifold, the $L^2$-gradient of the functional $g\mapsto\int\sigma_k+\oint H_k$ on a conformal class is given exactly by the pair $(\sigma_k,H_k)$.  This is a consequence of the following formulae for the conformal linearizations of $\sigma_k$ and $H_k$.

\begin{lem}
 \label{lem:conformal_variation}
 Let $(X^{n+1},g)$ be a Riemannian manifold with non-empty boundary $M=\partial X$ and let $k\in\bN$.  If $k\geq 3$, assume additionally that $g$ is locally conformally flat.  For any $\Upsilon\in C^\infty(X)$, it holds that
 \begin{align}
  \label{eqn:genl_trans_sigmak} \left.\frac{\partial}{\partial t}\right|_{t=0} \sigma_k^{e^{2t\Upsilon}g} & = -2k\Upsilon\sigma_k - \delta\left(T_{k-1}(\nabla\Upsilon)\right) , \\
  \label{eqn:genl_trans_Hk} \left.\frac{\partial}{\partial t}\right|_{t=0} H_k^{e^{2t\Upsilon}g} & = -(2k-1)\Upsilon H_k + T_{k-1}(\eta,\nabla\Upsilon) - \odelta\left(S_{k-1}(\onabla\Upsilon)\right),
 \end{align}
 where $\delta=\tr_g\nabla$ and $\odelta=\tr_{\og}\onabla$ denote the divergence on $(X,g)$ and $(M,\og)$, respectively, and
 \[ S_{k-1} := \sum_{i=0}^{k-2} \frac{(2k-i-3)!(n-2k+i+2)!}{i!(n+1-k)!(2k-2i-3)!!}T_{2k-i-3,i} . \]
\end{lem}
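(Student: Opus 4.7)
I would treat the two formulas separately, since the interior identity is essentially classical, while the boundary identity requires more careful bookkeeping.

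\emph{Interior formula.} I would start from the standard conformal transformation
\[ P^{e^{2\Upsilon}g} = P - \nabla^2\Upsilon + d\Upsilon\otimes d\Upsilon - \tfrac{1}{2}\lv\nabla\Upsilon\rv^2 g, \]
whence $\left.\tfrac{d}{dt}\right|_{t=0} P^{e^{2t\Upsilon}g} = -\nabla^2\Upsilon$. Since raising an index with $e^{2t\Upsilon}g$ contributes an overall factor $e^{-2kt\Upsilon}$ when computing $\sigma_k$, the polarization identity $\left.\tfrac{d}{ds}\right|_{s=0}\sigma_k(B+sC) = \lp T_{k-1}(B),C\rp$ yields
\[ \left.\tfrac{d}{dt}\right|_{t=0}\sigma_k^{e^{2t\Upsilon}g} = -2k\Upsilon\,\sigma_k - \lp T_{k-1},\nabla^2\Upsilon\rp. \]
The claim then follows from the pointwise identity $\lp T_{k-1},\nabla^2\Upsilon\rp = \delta(T_{k-1}(\nabla\Upsilon)) - (\delta T_{k-1})(\nabla\Upsilon)$ together with the vanishing of $\delta T_{k-1}$; the latter follows from a direct Bianchi calculation when $k\leq 2$, and from Viaclovsky's result that $T_{k-1}$ is divergence-free when $g$ is locally conformally flat.

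\emph{Boundary formula.} The strategy is to differentiate~\eqref{eqn:defn_Hk} summand by summand, group like contributions, and integrate the tangential derivatives by parts along $M$. The three linearizations I need are
\[ \left.\tfrac{d}{dt}\right|_{t=0}\!\bigl(\og^{-1}i^\ast P\bigr) = -2\Upsilon\,\og^{-1}i^\ast P - \og^{-1}i^\ast\nabla^2\Upsilon, \]
\[ \left.\tfrac{d}{dt}\right|_{t=0}\!\bigl(\og^{-1}A\bigr) = -\Upsilon\,\og^{-1}A + (\partial_\eta\Upsilon)\,\id, \]
together with $\dvol_{\og_t} = e^{nt\Upsilon}\dvol_{\og}$, which follow from the standard conformal transformations of $P$, $\eta$, and $A$. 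Applying the polarization identity
\[ \left.\tfrac{d}{ds}\right|_{s=0}\sigma_{m,j}(B+sB',C+sC') = j\,\lp T_{m-1,j-1},B'\rp + (m-j)\,\lp T_{m-1,j},C'\rp \]
to each $\sigma_{2k-j-1,j}(\og^{-1}i^\ast P,\og^{-1}A)$ produces four kinds of terms: zeroth-order contributions proportional to $\Upsilon\sigma_{\ast,\ast}$, contributions proportional to $\partial_\eta\Upsilon$ arising from $\dot A$, tangential Hessian contributions $\onabla^2\Upsilon$, and normal-Hessian contributions involving $\nabla^2\Upsilon(\eta,\cdot)$ after using~\eqref{eqn:gauss_codazzi} to decompose $i^\ast\nabla^2\Upsilon$.

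\emph{Main obstacle.} The combinatorial reorganization of the resulting four families of terms is the real work. The zeroth-order family must collapse to $-(2k-1)\Upsilon H_k$, which amounts to a telescoping identity on the coefficients $c_{k,j,n}=\frac{(2k-j-1)!(N-2k+j)!}{(N-k)!(2k-2j-1)!!j!}$. The tangential-Hessian family must be repackaged as $-\odelta(S_{k-1}(\onabla\Upsilon))$, which determines the precise form of $S_{k-1}$: each shift $j\mapsto j$ and $(2k-j-1)\mapsto(2k-j-2)$ in the Newton tensors induced by integration by parts must match a shifted coefficient $c_{k-1,j,n}$, and I would verify this by comparing the recursion $T_{k,\ell} = \text{(polarization)}$ of $T_{k-1,\ell-1}$ and $T_{k-1,\ell}$ against the coefficients of $S_{k-1}$. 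Finally, the $\partial_\eta\Upsilon$-terms from $\dot A$ combine with the $\nabla^2\Upsilon(\eta,\cdot)$-terms surviving from the Gauss--Codazzi substitution to produce exactly $T_{k-1}(\eta,\nabla\Upsilon)$; the crucial identity is that the mixed Newton tensor of $\og^{-1}i^\ast P$ and $\og^{-1}A$, supplemented by one normal direction via $\eta$, assembles into the full Newton tensor $T_{k-1}$ of $g^{-1}P$ on $TX$. Once the coefficients $c_{k,j,n}$ and those defining $S_{k-1}$ are recognized as the unique ones compatible with this integration-by-parts bookkeeping, the identity~\eqref{eqn:genl_trans_Hk} follows.
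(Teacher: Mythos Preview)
Your treatment of the interior identity~\eqref{eqn:genl_trans_sigmak} is correct and matches the paper's argument.

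For the boundary identity, your overall strategy---differentiate~\eqref{eqn:defn_Hk} term by term and reorganize---is also the paper's strategy, but your account of where the transverse term $T_{k-1}(\eta,\nabla\Upsilon)$ comes from contains a genuine error. You claim that decomposing $i^\ast\nabla^2\Upsilon$ produces ``normal-Hessian contributions involving $\nabla^2\Upsilon(\eta,\cdot)$''. It does not: since the linearization is contracted only against tangential tensors, the relevant decomposition is simply
\[
i^\ast\nabla^2\Upsilon = \onabla^2\Upsilon + (\eta\Upsilon)\,A,
\]
and no mixed normal-tangential second derivatives of $\Upsilon$ appear. (The reference to~\eqref{eqn:gauss_codazzi} is also misplaced; that equation concerns $i^\ast P$, not Hessians.) Consequently your proposed mechanism cannot produce the tangential part $T_{k-1}(\eta,\onabla\Upsilon)$, which genuinely involves the off-diagonal Schouten component $P(\eta,\cdot)\rvert_{TM}$.

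The missing ingredient is that the mixed Newton tensors on the boundary are \emph{not} divergence-free. The paper invokes Chen's formula
\[
\odelta T_{q,r} = -\frac{(n-q)(q-r)}{q}\,T_{q-1,r}\bigl(P(\eta),\cdot\bigr),
\]
so that when one rewrites $\langle T_{2k-i-2,i-1},\onabla^2\Upsilon\rangle$ as $\odelta\bigl(T_{2k-i-2,i-1}(\onabla\Upsilon)\bigr)$ plus a correction, the correction term carries $P(\eta)$. It is precisely these $P(\eta)$-corrections, together with the $(\eta\Upsilon)$-terms coming from $\dot A$ and from the $A$-piece of $i^\ast\nabla^2\Upsilon$, that telescope across the sum in~\eqref{eqn:defn_Hk} to yield $T_{k-1}(\eta,\nabla\Upsilon)$. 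Without this divergence formula (or an equivalent identity expressing the boundary Newton tensors in terms of the ambient $T_{k-1}$), your bookkeeping cannot close.
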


\begin{proof}
 The expression~\eqref{eqn:genl_trans_sigmak} is well-known, and follows immediately from the formula
 \[ \left.\frac{\partial}{\partial t}\right|_{t=0} e^{2kt\Upsilon}\sigma_k^{e^{2t\Upsilon}g} = -\left\lp T_{k-1},\nabla^2\Upsilon\right\rp \]
 and the fact that $T_{k-1}$ is divergence-free under our hypotheses.

 To derive~\eqref{eqn:genl_trans_Hk}, recall from~\cite{Chen2009s} that
 \begin{equation}
  \label{eqn:divergence_Tqr}
  \odelta T_{q,r} = -\frac{(n-q)(q-r)}{q}T_{q-1,r}\left(P(\eta),\cdot\right) .
 \end{equation}
 where $P(\eta)$ denotes the vector field $(g^{-1}P)(\eta)$.  Fix $\Upsilon\in C^\infty(X)$ and, for brevity, denote
 \[ \sigma_{2k-i-1,i}^\bullet := \left.\frac{\partial}{\partial t}\right|_{t=0}e^{(2k-1)t\Upsilon}\sigma_{2k-i-1,i}^{e^{2t\Upsilon}g} . \]
 It follows immediately from the conformal transformation formulae for the Schouten tensor and the second fundamental form that
 \begin{align*}
  \sigma_{2k-i-1,i}^\bullet & = -\frac{i}{2k-i-1}\left\lp T_{2k-i-2,i-1}, \onabla^2\Upsilon + (\eta\Upsilon)A\right\rp \\
   & \quad + \frac{2k-2i-1}{2k-i-1}(\eta\Upsilon) \tr_{\og} T_{2k-i-2,i}
 \end{align*}
 for any $k\in\bN$ and any $i\in\bN_0$.  Using~\eqref{eqn:divergence_Tqr}, we compute that
 \begin{align*}
  \sigma_{2k-i-1,i}^\bullet & = -\frac{i}{2k-i-1}\odelta\left(T_{2k-i-2,i-1}(\onabla\Upsilon)\right) + \frac{i(i-1)}{2k-i-1}T_{2k-i-2,i-2}\left(P(\eta),\onabla\Upsilon\right) \\
   & \quad - \frac{i(n-2k+i+2)(2k-2i-1)}{(2k-i-1)(2k-i-2)}T_{2k-i-3,i-1}\left(P(\eta),\onabla\Upsilon\right) \\
   & \quad - i(\eta\Upsilon)\sigma_{2k-i-1,i-1} + \frac{(n-2k+i+2)(2k-2i-1)}{2k-i-1}(\eta\Upsilon)\sigma_{2k-i-2,i} .
 \end{align*}
 Inserting this into the definition of $H_k$ yields~\eqref{eqn:genl_trans_Hk}
\end{proof}

As a corollary of Lemma~\ref{lem:conformal_variation}, we find a functional with conformal gradient equal to $(\sigma_k,H_k)$.  In the case $N\not=2k$, this recovers the original observation of Chen~\cite{Chen2009s}, while when $N=2k$ this generalizes the functional found by Brendle and Viaclovsky~\cite{BrendleViaclovsky2004} to manifolds with boundary.

\begin{prop}
 \label{prop:boundary_functional}
 Let $(X^{n+1},g_0)$ be a compact Riemannian manifold with non-empty boundary $M=\partial X$ and let $k\in\bN$.  If $k\geq 3$, assume additionally that $g$ is locally conformally flat.  Let $\kC=\left\{ e^{2u}g_0\suchthat u\in C^\infty(X)\right\}$ denote the set of all metrics on $X$ which are pointwise conformally equivalent to $g_0$.  Define $\mS_k\colon\kC\to\bR$ by
 \begin{align*}
  \mS_k(g) & = \frac{1}{N-2k}\left[\int_X \sigma_k^g\,\dvol_g + \oint_M H_k^g\,\dvol_{\og} \right], && \text{if $N\not=2k$}, \\
  \mS_k(g) & = \int_0^1\left\{ \int_X u\,\sigma_k^{g_s}\,\dvol_{g_s} + \oint_M u\,H_k^{g_s}\,\dvol_{\og_s} \right\} ds, && \text{if $N=2k$},
 \end{align*}
 where $g=e^{2u}g_0$ and $g_s=e^{2su}g_0$ in the second case.  Then
 \begin{equation}
  \label{eqn:primitive}
  \left.\frac{d}{dt}\right|_{t=0}\mS_k(e^{2t\Upsilon}g) = \int_X \Upsilon\sigma_k\,\dvol_g + \oint_M \Upsilon H_k\,\dvol_{\og}
 \end{equation}
 for all $g\in\kC$ and all $\Upsilon\in C^\infty(M)$.
\end{prop}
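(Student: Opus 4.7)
The plan is to differentiate $\mS_k$ directly using Lemma~\ref{lem:conformal_variation} together with the volume-form variations $\left.\tfrac{d}{dt}\right|_{t=0}\dvol_{g_t} = N\Upsilon\,\dvol_{g}$ and $\left.\tfrac{d}{dt}\right|_{t=0}\dvol_{\og_t} = n\Upsilon\,\dvol_{\og}$ along $g_t = e^{2t\Upsilon}g$. Combining these yields the basic building blocks
\begin{align*}
 \left.\tfrac{d}{dt}\right|_{t=0}\bigl(\sigma_k^{g_t}\dvol_{g_t}\bigr) &= \bigl[(N-2k)\Upsilon\sigma_k - \delta(T_{k-1}(\nabla\Upsilon))\bigr]\,\dvol_g, \\
 \left.\tfrac{d}{dt}\right|_{t=0}\bigl(H_k^{g_t}\dvol_{\og_t}\bigr) &= \bigl[(N-2k)\Upsilon H_k + T_{k-1}(\eta,\nabla\Upsilon) - \odelta(S_{k-1}(\onabla\Upsilon))\bigr]\,\dvol_{\og},
\end{align*}
which drive both cases.

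In the non-critical case $N\neq 2k$, I would integrate these identities and apply the divergence theorem on $X$ to convert $\int_X \delta(T_{k-1}(\nabla\Upsilon))\,\dvol_g$ into $\oint_M T_{k-1}(\eta,\nabla\Upsilon)\,\dvol_{\og}$. This exactly cancels the analogous boundary contribution from the variation of $H_k$, while $\oint_M \odelta(S_{k-1}(\onabla\Upsilon))\,\dvol_{\og}=0$ since $M$ is closed. The surviving terms collect into $(N-2k)$ times the right-hand side of~\eqref{eqn:primitive}, and dividing by $N-2k$ completes this case.

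In the critical case $N=2k$, the factors $(N-2k)$ vanish and a different approach is needed. Setting $L(v,w) := \int_X w\,\sigma_k^{g_v}\,\dvol_{g_v} + \oint_M w\,H_k^{g_v}\,\dvol_{\og_v}$, we have $\mS_k(g_u) = \int_0^1 L(su,u)\,ds$. The key step is to establish the symmetry
\[ (D_1 L)(v,w)[\dot v] = \int_X T_{k-1}(\nabla w,\nabla\dot v)\,\dvol_{g_v} + \oint_M S_{k-1}(\onabla w,\onabla\dot v)\,\dvol_{\og_v}, \]
which is manifestly symmetric in $(w,\dot v)$ because $T_{k-1}$ and $S_{k-1}$ are symmetric $2$-tensors. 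This follows from the building blocks above (with $N=2k$) together with integration by parts on $X$ and on the closed manifold $M$: the $\pm\oint_M w\,T_{k-1}(\eta,\nabla\dot v)\,\dvol_{\og_v}$ pieces arising from the interior divergence and from the variation of $H_k$ cancel exactly, which is where the joint variational nature of $(\sigma_k,H_k)$ enters.

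Given this symmetry, the chain rule yields $\left.\tfrac{d}{d\tau}\right|_{\tau=0}\!\mS_k(g_{u+\tau\Upsilon}) = \int_0^1 \bigl[s\,(D_1 L)(su,u)[\Upsilon] + L(su,\Upsilon)\bigr]\,ds$. The symmetry identifies $(D_1 L)(su,u)[\Upsilon]$ with $(D_1 L)(su,\Upsilon)[u] = \tfrac{d}{ds}L(su,\Upsilon)$, and integration by parts in $s$ gives $\int_0^1 s\,\tfrac{d}{ds}L(su,\Upsilon)\,ds = L(u,\Upsilon) - \int_0^1 L(su,\Upsilon)\,ds$. The two copies of $\int_0^1 L(su,\Upsilon)\,ds$ cancel, leaving $L(u,\Upsilon)$, the right-hand side of~\eqref{eqn:primitive}. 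The principal obstacle is verifying the symmetry of $D_1 L$; this reduces to the precise cancellation of boundary terms described above, and is where the compatibility of $(\sigma_k,H_k)$ as a variational pair is really used.
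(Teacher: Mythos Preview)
Your proposal is correct and follows essentially the same route as the paper: the $N\neq 2k$ case is identical, and in the critical case your symmetry identity for $D_1L$ is precisely the paper's ``integrating by parts since $T_{k-1}$ and $S_{k-1}$ are symmetric'' step that swaps $u$ and $\Upsilon$, after which recognizing $\tfrac{d}{ds}L(su,\Upsilon)$ and integrating by parts in $s$ matches the paper's final total-$s$-derivative computation. The only difference is packaging---you name the bilinear functional $L(v,w)$ and its first-slot derivative explicitly, whereas the paper writes out the integrands.
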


\begin{proof}
 Suppose first that $N\not=2k$.  It follows immediately from Lemma~\ref{lem:conformal_variation} and the definition of $\mS_k$ that
 \begin{multline*}
  \left.\frac{d}{dt}\right|_{t=0}\mS_k(e^{2t\Upsilon}g) = \int_X \Upsilon\sigma_k\,\dvol_g - \frac{1}{N-2k}\int_X \delta\left(T_{k-1}(\nabla\Upsilon)\right)\dvol_g \\ + \oint_M \Upsilon H_k\,\dvol_{\og} - \frac{1}{N-2k}\oint_M \left(\odelta\left(S_{k-1}(\onabla\Upsilon)\right) - T_{k-1}(\eta,\nabla\Upsilon)\right) \dvol_{\og} .
 \end{multline*}
 The divergence theorem immediately yields~\eqref{eqn:primitive}.

 Suppose next that $N=2k$.  It follows immediately from Lemma~\ref{lem:conformal_variation} and the definition of $\mS_k$ that
 \begin{multline*}
  \left.\frac{d}{dt}\right|_{t=0}\mS_k(e^{2t\Upsilon}g) = \int_0^1\Biggl\{ \int_X \left[ \Upsilon\sigma_k^{g_s} - su\delta^{g_s}\left(T_{k-1}^{g_s}(\nabla^{g_s}\Upsilon)\right) \right] \dvol_{g_s} \\
   + \oint_M \left[ \Upsilon H_k^{g_s} + suT_{k-1}^{g_s}(\eta^{g_s},\nabla^{g_s}\Upsilon) - su\odelta^{\og_s}\left(S_{k-1}^{g_s}(\onabla^{\og_s}\Upsilon)\right) \right] \dvol_{\og_s} \Biggl\} ds .
 \end{multline*}
 Since both $T_{k-1}$ and $S_{k-1}$ are symmetric tensors, integrating by parts yields
 \begin{multline*}
  \left.\frac{d}{dt}\right|_{t=0}\mS_k(e^{2t\Upsilon}g) = \int_0^1\Biggl\{ \int_X \left[ \Upsilon\sigma_k^{g_s} - s\Upsilon\delta^{g_s}\left(T_{k-1}^{g_s}(\nabla^{g_s}u)\right) \right] \dvol_{g_s} \\
   + \oint_M \left[ \Upsilon H_k^{g_s} + s\Upsilon T_{k-1}^{g_s}(\eta^{g_s},\nabla^{g_s}u) - s\Upsilon\odelta^{\og_s}\left(S_{k-1}^{g_s}(\onabla^{\og_s}u)\right) \right] \dvol_{\og_s} \Biggl\} ds .
 \end{multline*}
 Using Lemma~\ref{lem:conformal_variation} again, we conclude that
 \begin{align*}
  \left.\frac{d}{dt}\right|_{t=0}\mS_k(e^{2t\Upsilon}g) & = \int_0^1\Biggl\{ \int_X \left[ \Upsilon\sigma_k^{g_s}\dvol_{g_s} + s\Upsilon\frac{\partial}{\partial s}\left(\sigma_k^{g_s}\dvol_{g_s}\right) \right] \\
   & \quad + \oint_M \left[ \Upsilon H_k^{g_s} + s\Upsilon\frac{\partial}{\partial s}\left(H_k^{g_s}\dvol_{\og_s}\right) \right] \Biggr\} ds .
 \end{align*}
 Changing the order of integration readily yields~\eqref{eqn:primitive}.
\end{proof}

\section{The boundary operators}
\label{sec:boundary}

The primary goal of this article is to study critical points of the functional $\mS_k$ from Proposition~\ref{prop:boundary_functional} for various classes of functions.  We are especially interested in extremizing $\mS_k$ and understanding the geometric significance of the extremal functions.  The goal of this section is to introduce multilinear conformally covariant operators that are well-adapted to this study, analogous to the role of the conformal Laplacian and the conformal Robin operator in the study of the Yamabe Problem on manifolds with boundary~\cite{Escobar1988,Escobar1992a}.  For the remainder of this article, we restrict our attention to noncritical dimensions $N\not=2k$.

Let $(X^{n+1},g)$ be a Riemannian manifold with non-empty boundary $M$ and fix a positive integer $k$ such that $N\not=2k$.  Given a positive function $u\in C^\infty(X)$, set
\begin{align}
 \label{eqn:Lk_defn} L_k(u) & = \left(\frac{N-2k}{2k}u\right)^{2k-1}u^{\frac{4k^2}{N-2k}}\sigma_k^{g_u}, \\
 \label{eqn:Bk_defn} B_k(u) & = \left(\frac{N-2k}{2k}u\right)^{2k-1}u^{\frac{2k(2k-1)}{N-2k}}H_k^{g_u} ,
\end{align}
where $g_u:=u^{\frac{4k}{N-2k}}g$.  Our goal is to show that $L_k$ and $B_k$ can be polarized to obtain multilinear conformally covariant operators.  This is a consequence of the basic symmetry and transformation properties of these operators.  The case of $L_k$ is as follows:

\begin{lem}
 \label{lem:Lk}
 Let $(X^{n+1},g)$ be a Riemannian manifold and let $k\in\bN$ be such that $n+1\not=2k$.  Define $L_k\colon C^\infty(X)\to C^\infty(X)$ by~\eqref{eqn:Lk_defn}.  Then
 \begin{align}
  \label{eqn:Lk_to_sigmak} L_k(1) & = \left(\frac{N-2k}{2k}\right)^{2k-1}\sigma_k, \\
  \label{eqn:Lk_homogeneous} L_k(cu) & = c^{2k-1}L_k(u), \\
  \label{eqn:Lk_transformation} L_k^{\hg}(u) & = e^{-\frac{(2k-1)N+2k}{2k}\Upsilon} L_k^g\left(e^{\frac{N-2k}{2k}\Upsilon}u\right)
 \end{align}
 for all $c\in\bR$ and all $u,\Upsilon\in C^\infty(X)$, where $L_k^{\hg}$ is defined with respect to $\hg:=e^{2\Upsilon}g$.  Moreover, $L_k(u)$ is a homogeneous polynomial of degree $2k-1$ in the two-jet of $u$.  In particular, the polarization of $L_k$ is conformally covariant and $(2k-1)$-linear.
\end{lem}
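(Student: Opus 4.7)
The first equation~\eqref{eqn:Lk_to_sigmak} is immediate from~\eqref{eqn:Lk_defn}: setting $u\equiv1$ gives $g_u = g$ and thus $\sigma_k^{g_u} = \sigma_k^g$. Homogeneity~\eqref{eqn:Lk_homogeneous} reduces to the constant conformal rescaling rule for $\sigma_k$: since $g_{cu} = c^{\frac{4k}{N-2k}}g_u$ and $\sigma_k^{\lambda^2 g} = \lambda^{-2k}\sigma_k^g$, one has $\sigma_k^{g_{cu}} = c^{-\frac{4k^2}{N-2k}}\sigma_k^{g_u}$; combined with the $u$-prefactors in~\eqref{eqn:Lk_defn}, this collects to $c^{2k-1}L_k(u)$. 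For the conformal transformation~\eqref{eqn:Lk_transformation}, I would introduce $v := e^{\frac{N-2k}{2k}\Upsilon}u$; a direct computation using $\hg = e^{2\Upsilon}g$ gives $\hg_u = u^{\frac{4k}{N-2k}}\hg = v^{\frac{4k}{N-2k}}g = g_v$, whence $\sigma_k^{\hg_u} = \sigma_k^{g_v}$. Substituting into $L_k^{\hg}(u)$ and $L_k^g(v)$ and collecting exponents of $e^{\Upsilon}$ reduces the claim to the elementary identity $\tfrac{(2k-1)(N-2k)}{2k} + 2k = \tfrac{(2k-1)N + 2k}{2k}$.

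The substantive step is the polynomial assertion. The standard conformal transformation of the Schouten tensor applied to $g_u = u^{\frac{4k}{N-2k}}g$ expresses each entry of $u^2 g^{-1}P^{g_u}$ as a polynomial of degree $2$ in the 2-jet $(u,\nabla u,\nabla^2 u)$. By the polynomial formula~\eqref{eqn:sym_sigmak} for $\sigma_k$, the quantity $Q_k := \sigma_k(u^2 g^{-1}P^{g_u}) = u^{2k}\sigma_k(g^{-1}P^{g_u})$ is then a polynomial of degree $2k$ in the same variables, and
\[ L_k(u) = \Bigl(\tfrac{N-2k}{2k}\Bigr)^{2k-1}u^{2k-1}\sigma_k(g^{-1}P^{g_u}) = \Bigl(\tfrac{N-2k}{2k}\Bigr)^{2k-1}u^{-1}Q_k. \]
It therefore suffices to show that $Q_k$ is divisible by $u$, equivalently that $Q_k$ vanishes identically when $u$ is set to $0$ while $\nabla u,\nabla^2 u$ are left free.

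At $u = 0$ the polynomial expression for $u^2 P^{g_u}$ collapses to $\alpha\,du\otimes du + \beta\lv\nabla u\rv^2 g$ with $\alpha = \tfrac{2kN}{(N-2k)^2}$ and $\beta = -\tfrac{2k^2}{(N-2k)^2}$. The endomorphism $g^{-1}(\alpha\,du\otimes du + \beta\lv\nabla u\rv^2 g)$ has eigenvalue $(\alpha+\beta)\lv\nabla u\rv^2$ along $\nabla u$ and an $(N-1)$-fold eigenvalue $\beta\lv\nabla u\rv^2$ on the orthogonal complement; hence
\[ Q_k\rv_{u=0} = \lv\nabla u\rv^{2k}\beta^{k-1}\Bigl[\binom{N-1}{k-1}\alpha + \binom{N}{k}\beta\Bigr] = \frac{2k\lv\nabla u\rv^{2k}\beta^{k-1}}{(N-2k)^2}\Bigl[N\binom{N-1}{k-1} - k\binom{N}{k}\Bigr], \]
and the bracket vanishes by the elementary identity $k\binom{N}{k} = N\binom{N-1}{k-1}$. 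Hence $u$ divides $Q_k$, so $L_k(u)$ is a polynomial in the 2-jet of $u$; its degree is forced to be $2k-1$ by~\eqref{eqn:Lk_homogeneous}.

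The remaining assertion on polarization is formal: a homogeneous polynomial of degree $2k-1$ in the 2-jet admits a unique symmetric $(2k-1)$-linear extension, and polarizing the identity~\eqref{eqn:Lk_transformation} in $u$ (for each fixed $\Upsilon$) yields the corresponding conformally covariant transformation rule for the polarization. The main obstacle in the above plan is the polynomiality, specifically the cancellation forcing $Q_k$ to vanish at $u = 0$; it is precisely at this step that the exponent $\tfrac{4k}{N-2k}$ in the definition of $g_u$ is essential.
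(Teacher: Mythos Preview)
Your proof is correct and follows essentially the same route as the paper. Both arguments reduce the polynomiality claim to showing that $\sigma_k$ of the (rescaled) Schouten tensor $u^2P^{g_u}$, viewed as a degree-$2k$ polynomial in the two-jet, has no $u$-free term; the paper phrases the key cancellation as ``$\sigma_j(du\otimes\nabla u)=0$ for $j\geq 2$, hence $\sigma_k\bigl(g^{-1}(N\,du\otimes du - k\lv\nabla u\rv^2 g)\bigr)=0$'' and leaves the arithmetic implicit, whereas you carry out the eigenvalue computation explicitly and close with the binomial identity $k\binom{N}{k}=N\binom{N-1}{k-1}$---but these are the same calculation in different dress.
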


\begin{proof}
 Using the function $u=1$ in~\eqref{eqn:Lk_defn} immediately yields~\eqref{eqn:Lk_to_sigmak}.  For the remaining properties, recall that, as a section of $S^2T^\ast X$,
 \begin{multline}
  \label{eqn:schouten_transformation}
  P^{g_u} = P - \frac{2k}{N-2k}u^{-1}\nabla^2u \\ + \frac{2Nk}{(N-2k)^2}u^{-2}du\otimes du - \frac{1}{2}\left(\frac{2k}{N-2k}\right)^2u^{-2}\lv\nabla u\rv^2g .
 \end{multline}
 where all quantities on the right-hand side are computed with respect to $g$.  In particular, $P^{g_{cu}}=P^{g_u}$, so that~\eqref{eqn:Lk_homogeneous} follows immediately from~\eqref{eqn:Lk_defn}.

 Next, observe that our definition~\eqref{eqn:Lk_defn} is equivalently written
 \begin{equation}
  \label{eqn:Lk_defn2}
  L_k(u) = \frac{2ku^{-1}}{N-2k}\sigma_k\left(g^{-1}B\right) ,
 \end{equation}
 where we use~\eqref{eqn:schouten_transformation} to realize $B$ as
 \begin{equation}
  \label{eqn:B_defn}
   B = \left(\frac{N-2k}{2k}u\right)^2P - \frac{N-2k}{2k}u\nabla^2u + \frac{N}{2k}du \otimes du - \frac{1}{2}\lv\nabla u\rv^2 g .
 \end{equation}
 It is readily computed from~\eqref{eqn:sym_sigmak} that $\sigma_j(du\otimes\nabla u)=0$ for $j\geq 2$.  Therefore
 \[ \sigma_k\left(g^{-1}(N\,du\otimes du - k\lv\nabla u\rv^2 g)\right) = 0 . \]
 Combining this with~\eqref{eqn:B_defn}, we see that $\sigma_k(g^{-1}B)$ is a homogeneous polynomial of degree $k$ in $u^2,du\otimes du,u\nabla^2u$ for which the coefficient of $(du\otimes du)^k$ vanishes identically.  In particular, we may divide $\sigma_k(g^{-1}B)$ by $u$ to obtain a homogeneous polynomial of degree $2k-1$ in $u,du,\nabla^2u$.  Inserting this into~\eqref{eqn:Lk_defn2} implies that $L_k(u)$ is a homogeneous polynomial of degree $2k-1$ in the two-jet of $u$.  We may thus polarize $L_k$ to obtain a $\bR$-multilinear operator
 \begin{equation}
  \label{eqn:Lk_polarization}
  L_k \colon \left(C^\infty(X)\right)^{2k-1} \to C^\infty(X) .
 \end{equation}

 Finally, let $\hg=e^{2\Upsilon}g$ be a pointwise conformal rescaling of $g$.  In terms of $\hg$ we have that
 \begin{equation}
  \label{eqn:choice_of_weight}
  g_u = \left(e^{-\frac{N-2k}{2k}\Upsilon}u\right)^{\frac{4k}{N-2k}}\hg .
 \end{equation}
 Inserting~\eqref{eqn:choice_of_weight} into~\eqref{eqn:Lk_defn} yields~\eqref{eqn:Lk_transformation}.  Writing~\eqref{eqn:Lk_transformation} in terms of the polarization~\eqref{eqn:Lk_polarization} of $L_k$ yields the conformal covariance of the polarization of $L_k$.
\end{proof}

The fact that $B_k$ as in~\eqref{eqn:Bk_defn} can be polarized to obtain a multilinear conformally covariant operator is proven in a similar manner.

\begin{lem}
 \label{lem:Bk}
 Let $(X^{n+1},g)$ be a Riemannian manifold with nonempty boundary $M^n=\partial X$ and define $B_k\colon C^\infty(X)\to C^\infty(M)$ by~\eqref{eqn:Bk_defn}.  Then
 \begin{align}
  \label{eqn:Bk_to_sigmak} B_k(1) & = \left(\frac{N-2k}{2k}\right)^{2k-1}H_k, \\
  \label{eqn:Bk_homogeneous} B_k(cu) & = c^{2k-1}B_k(u), \\
  \label{eqn:Bk_transformation} B_k^{\hg}(u) & = e^{-\frac{(2k-1)N}{2k}\Upsilon} B_k^g\left(e^{\frac{N-2k}{2k}\Upsilon}u\right)
 \end{align}
 for all $c\in\bR$ and all $u,\Upsilon\in C^\infty(X)$, where $B_k^{\hg}$ is defined with respect to $\hg:=e^{2\Upsilon}g$.  Moreover, $B_k(u)$ is a homogeneous polynomial of degree $2k-1$ in the two-jet of $u$.  In particular, the polarization of $B_k$ is conformally covariant and $(2k-1)$-linear.
 Finally, $B_k(u)$ depends only on $u,\onabla u,\onabla^2u,\eta u$; i.e.\ $B_k$ is a fully nonlinear second order operator in $u$, but it does not depend on second order derivatives in directions transverse to $M$.
\end{lem}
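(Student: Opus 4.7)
The plan is to follow the proof of Lemma~\ref{lem:Lk} closely, with an extra verification at the end for the assertion about transverse derivatives. Substituting $u=1$ into~\eqref{eqn:Bk_defn} immediately yields~\eqref{eqn:Bk_to_sigmak}. For~\eqref{eqn:Bk_homogeneous}, I would use two conformal-invariance observations: first, $P^{g_{cu}}=P^{g_u}$, already used in the $L_k$ analysis; second, the analogous relation $A^{g_{cu}}=c^{2k/(N-2k)}A^{g_u}$, which follows from the standard formula $A^{e^{2\sigma}g}=e^{\sigma}(A+(\partial_\eta\sigma)\og)$ applied with $e^{2\sigma}=(cu)^{4k/(N-2k)}$ and the observation $\partial_\eta\log(cu)=u^{-1}\eta u$. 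Combining these with the scaling of $\og^{g_{cu}}$ and the prefactor in~\eqref{eqn:Bk_defn} delivers the stated $c^{2k-1}$ homogeneity.

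The heart of the proof is to rewrite $B_k(u)$ in a form analogous to~\eqref{eqn:Lk_defn2}, namely as $u^{-1}$ times a polynomial expression in the two-jet of $u$. To do this I would substitute~\eqref{eqn:schouten_transformation} together with $A^{g_u}=u^{2k/(N-2k)}\bigl(A+\tfrac{2k}{N-2k}u^{-1}(\eta u)\og\bigr)$ into the definition~\eqref{eqn:defn_Hk} of $H_k^{g_u}$, raise indices via $(\og^{g_u})^{-1}=u^{-4k/(N-2k)}\og^{-1}$, and collect $u$-powers. As in the interior case, the terms of maximal degree in $du\otimes du$ alone vanish by the antisymmetry of the generalized Kronecker delta, and this forces the divisibility by $u$ that is needed to cancel the singular factor. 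After dividing by $u$ and multiplying by the prefactor, $B_k(u)$ becomes a polynomial of degree $2k-1$ in the two-jet of $u$, which we may polarize to obtain an $\bR$-multilinear operator $B_k\colon(C^\infty(X))^{2k-1}\to C^\infty(M)$. The transformation law~\eqref{eqn:Bk_transformation} then follows from~\eqref{eqn:choice_of_weight} by direct substitution exactly as in the proof of Lemma~\ref{lem:Lk}, and conformal covariance of the polarization follows because each argument carries the same conformal weight.

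It remains to establish the statement that $B_k(u)$ depends only on $u$, $\onabla u$, $\onabla^2 u$, and $\eta u$. By~\eqref{eqn:defn_Hk} it suffices to verify that the inputs $i^\ast P^{g_u}$ and $A^{g_u}$ involve no second derivatives transverse to $M$. The transformation of $A$ introduces only the factor $u^{-1}\eta u$, and no second derivatives of $u$ at all. For $i^\ast P^{g_u}$, the only potentially offending term is the tangential-tangential component of $\nabla^2 u$; but for $X,Y\in TM$ the Gauss formula gives $\nabla^2 u(X,Y)=\onabla^2 u(X,Y)+A(X,Y)(\eta u)$, while $\lv\nabla u\rv^2=\lv\onabla u\rv^2+(\eta u)^2$, so neither $\eta^2 u$ nor tangential derivatives of $\eta u$ appear. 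The main obstacle in carrying this out is essentially bookkeeping: tracking the combinatorial coefficients in~\eqref{eqn:defn_Hk} and the interaction between the $u^{-1}(\eta u)\delta$ piece of $\og^{-1}A^{g_u}$ and the $u^{-1}\nabla^2 u$ piece of $\og^{-1}i^\ast P^{g_u}$ inside each $\sigma_{2k-j-1,j}$, which is algebraically intricate but uses no new ideas beyond those already present in the $L_k$ case.
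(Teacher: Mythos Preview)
Your proposal is essentially correct and parallels the paper's proof, but there is one point where you anticipate more work than is actually needed. You expect to write $B_k(u)$ as $u^{-1}$ times a degree-$2k$ polynomial and then invoke a cancellation of the top $du\otimes du$ term, as in the $L_k$ case. In fact no such cancellation is required here: the second-fundamental-form slot contributes, after rescaling, a factor that is \emph{linear} in the one-jet of $u$ (a combination of $uA$ and $(\eta u)\og$), so each summand $\sigma_{2k-j-1,j}$ in $H_k^{g_u}$ already has total degree $2j+(2k-2j-1)=2k-1$ once the prefactors are absorbed. The paper makes this explicit by writing
\[
 B_k(u)=\sum_{j=0}^{k-1}\frac{(2k-j-1)!(N-2k+j)!}{(N-k)!(2k-2j-1)!!j!}\,\sigma_{2k-j-1,j}\bigl(\og^{-1}i^\ast B,\;\og^{-1}C\bigr),
\]
with $B$ as in~\eqref{eqn:B_defn} (quadratic in the two-jet) and $C$ linear in $u,\eta u$; the polynomial structure is then immediate with no division step. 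Your argument will still succeed once you carry out the computation---you will simply discover that the ``divisibility by $u$'' is vacuous---but the paper's route is cleaner and avoids the bookkeeping you flag as the main obstacle.

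For the transverse-derivative claim, your direct use of the Gauss formula $(\nabla^2 u)(X,Y)=(\onabla^2 u)(X,Y)+A(X,Y)\,\eta u$ for tangential $X,Y$ is valid. The paper instead invokes the Gauss--Codazzi identity~\eqref{eqn:gauss_codazzi} to express $i^\ast P$ in terms of $\oP$, $H$, $A_0$, and $W(\eta,\cdot,\eta,\cdot)$, and then appeals to the conformal covariance of each of these together with the tangential transformation of $\oP$ and the mean-curvature transformation; the two arguments are equivalent.
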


\begin{proof}
 Using the function $u=1$ in~\eqref{eqn:Bk_defn} immediately yields~\eqref{eqn:Bk_to_sigmak}.

 The fact that $B_k(u)$ depends at a point only on $u,\onabla u,\onabla^2u,\eta u$ at that point follows from three observations.  First, we recall that each of $A_0$, $W$, $\eta$, and $\og$ is conformally covariant.  Second, \eqref{eqn:schouten_transformation} gives the transformation formula for $\oP^{\og_u}$ entirely in terms of tangential data.  Third, the mean curvature transforms conformally by
 \[ u^{\frac{2k}{N-2k}}H^{g_u} = H + \frac{2k}{N-2k}u^{-1}\eta u . \]
 Inserting these observations into~\eqref{eqn:gauss_codazzi} yields the claimed dependence of $B_k(u)$ at a point only on $u$, $\onabla u$, $\onabla^2u$, and $\eta u$.

 For the remaining properties, recall that, as a section of $S^2T^\ast M$, it holds that
 \begin{equation}
  \label{eqn:second_fundamental_form_transformation}
  u^{-\frac{2k}{N-2k}}A^{g_u} = A_g + \frac{2k}{N-2k}u^{-1}\eta u\,\og .
 \end{equation}
 In particular, $A^{g_{cu}}=c^{\frac{2k}{N-2k}}A^{g_u}$, so that~\eqref{eqn:Bk_homogeneous} follows immediately from~\eqref{eqn:Bk_defn} and the homogeneity of the Schouten tensor.  Using\eqref{eqn:Bk_defn}, \eqref{eqn:schouten_transformation} and~\eqref{eqn:second_fundamental_form_transformation}, we see that
 \begin{equation}
  \label{eqn:Bk_defn2}
  B_k(u) = \sum_{j=0}^{k-1} \frac{(2k-j-1)!(N-2k+j)!}{(N-k)!(2k-2j-1)!!j!} \sigma_{2k-j-1,j}\left( \og^{-1}i^\ast B, \og^{-1}C \right),
 \end{equation}
 where $B$ is as in~\eqref{eqn:B_defn} and
 \[ C := n\eta u + \frac{N-2k}{2k}Hu . \]
 It follows immediately from~\eqref{eqn:Bk_defn2} that $B_k$ is a homogeneous polynomial of degree $2k-1$ in the two-jet of $u$.  In particular, the polarization
 \begin{equation}
  \label{eqn:Bk_polarization}
  B_k \colon \left(C^\infty(X)\right)^{2k-1} \to C^\infty(M)
 \end{equation}
 of $B_k$ is $\bR$-multilinear.  Finally, inserting~\eqref{eqn:choice_of_weight} into~\eqref{eqn:Bk_defn} immediately yields~\eqref{eqn:Bk_transformation}.  Writing~\eqref{eqn:Bk_transformation} in terms of the polarization~\eqref{eqn:Bk_polarization} of $B_k$ yields the conformal covariance of the polarization of $B_k$.
\end{proof}

The main result of this section is the fact that $(L_k,B_k)$ is a formally self-adjoint conformally covariant system.

\begin{thm}
 \label{thm:k_boundary_operators}
 Let $(X^{n+1},g)$ be a compact Riemannian manifold with boundary $M^n=\partial X$.  Fix $k\in\bN$ with $N\neq 2k$; if $k\geq3$, assume further that $g$ is locally conformally flat.  Define $\mQ_k\colon\left(C^\infty(X)\right)^{2k}\to\bR$ by
 \[ \mQ_k(u_1,\dotsc,u_{2k}) = \int_X u_1\,L_k(u_2,\dotsc,u_{2k})\dvol_g + \oint_M u_1\,B_k(u_2,\dotsc,u_{2k})\dvol_{\og} . \]
 Then $\mQ_k$ is a symmetric conformally covariant $\bR$-multilinear form; indeed,
 \[ \mQ_k^{\hg}(u_1,\dotsc,u_{2k}) = \mQ_k^g\left(e^{\frac{N-2k}{2k}\Upsilon}u_1,\dotsc,e^{\frac{N-2k}{2k}\Upsilon}u_{2k}\right) \]
 for all $u_1,\dotsc,u_{2k}\in C^\infty(X)$ and all metrics $\hg=e^{2\Upsilon}g$, $\Upsilon\in C^\infty(X)$.
\end{thm}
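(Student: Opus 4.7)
The plan proceeds in three parts corresponding to multilinearity, conformal covariance, and full symmetry.

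Multilinearity of $\mQ_k$ is immediate from Lemmas~\ref{lem:Lk} and~\ref{lem:Bk}: $L_k$ and $B_k$ are already $(2k-1)$-linear, and pairing against $u_1$ and integrating adds one more linear slot. Conformal covariance is a direct consequence of the transformation rules~\eqref{eqn:Lk_transformation} and~\eqref{eqn:Bk_transformation} combined with the volume rescalings $\dvol_{\hg} = e^{N\Upsilon}\dvol_g$ and its boundary analogue $e^{n\Upsilon}\dvol_{\og}$. In both integrands, the exponent from the volume form and the conformal weight of $L_k$ (resp.\ $B_k$) telescope: $N-\tfrac{(2k-1)N+2k}{2k} = \tfrac{N-2k}{2k}$ and $n-\tfrac{(2k-1)N}{2k} = \tfrac{N-2k}{2k}$. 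The surplus factor $e^{\tfrac{N-2k}{2k}\Upsilon}$ is absorbed into $u_1$, giving precisely the claimed weight on every slot.

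The bulk of the argument is proving full symmetry. Symmetry in positions $2,\dotsc,2k$ is built into the polarizations of $L_k$ and $B_k$, so only the interaction of position $1$ with the other positions remains. My plan is to compute the first variation of $\mE_k(u) := \mQ_k(u, u, \dotsc, u)$ in two ways. Using \eqref{eqn:Lk_defn}, \eqref{eqn:Bk_defn}, and $\dvol_{g_u} = u^{2kN/(N-2k)}\dvol_g$ (with its boundary analogue), one rewrites $\mE_k(u) = C\cdot \mS_k(g_u)$ for the constant $C = \tfrac{(N-2k)^{2k}}{(2k)^{2k-1}}$, where $\mS_k$ is the functional of Proposition~\ref{prop:boundary_functional}. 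Inserting the conformal factor $\dot\Upsilon(0) = \tfrac{2k}{N-2k}\phi u^{-1}$ coming from $g_{u+t\phi} = e^{2\Upsilon(t)}g_u$ into~\eqref{eqn:primitive} and then converting $(\sigma_k^{g_u}\dvol_{g_u}, H_k^{g_u}\dvol_{\og_u})$ back into $(L_k(u)\dvol_g, B_k(u)\dvol_{\og})$, the scaling constants should collapse to give
\[
 \frac{d}{dt}\bigg|_{t=0}\mE_k(u + t\phi) = 2k\,\mQ_k(\phi, u, u, \dotsc, u)
\]
for positive $u$; polynomiality of both sides in $u$ (of degree $2k-1$) and in $\phi$ (of degree $1$) then extends the identity to all $u, \phi \in C^\infty(X)$.

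Full symmetry is then extracted by a polarization argument. For arbitrary $v_1,\dotsc,v_{2k}\in C^\infty(X)$ set $F(t_1,\dotsc,t_{2k}) := \mE_k(\sum_i t_i v_i)$; the first-variation identity yields $\partial_{t_l}F = 2k\,\mQ_k\bigl(v_l, \sum_i t_i v_i, \dotsc, \sum_i t_i v_i\bigr)$ for each $l$. Applying $\partial_{t_{\sigma(2)}}\dotsb\partial_{t_{\sigma(2k)}}$ at $t=0$ for any permutation $\sigma\in S_{2k}$, and using the already-known symmetry of $\mQ_k$ in positions $2,\dotsc,2k$ to collapse the multinomial expansion to a single term, one obtains
\[
 \frac{\partial^{2k} F}{\partial t_{\sigma(1)}\partial t_{\sigma(2)}\dotsb\partial t_{\sigma(2k)}}\bigg|_{t=0} = (2k)!\,\mQ_k\bigl(v_{\sigma(1)}, v_{\sigma(2)}, \dotsc, v_{\sigma(2k)}\bigr).
\]
Since mixed partials commute, the right-hand side is independent of $\sigma$, which is precisely full symmetry of $\mQ_k$. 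The main obstacle will be the constant-chasing in the first-variation step: only if the conformal weights conspire to produce exactly the factor $2k$ on the right-hand side does the subsequent Schwarz's theorem argument deliver full symmetry without any spurious constants to quotient out.
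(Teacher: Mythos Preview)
Your proof is correct and uses the same core input as the paper---the conformal covariance of $L_k$, $B_k$ from Lemmas~\ref{lem:Lk}, \ref{lem:Bk} together with the variational identity~\eqref{eqn:primitive} from Proposition~\ref{prop:boundary_functional}---but you package the symmetry argument differently. The paper works at the basepoint $u=1$: it first compares the conformal variation of $\mQ_k(1,\dotsc,1)$ with~\eqref{eqn:primitive} to obtain $\mQ_k(\Upsilon,1,\dotsc,1)=\mQ_k(1,\Upsilon,1,\dotsc,1)$, then runs an induction, introducing one new slot at a time via a one-parameter function $F(t)$ and differentiating again. You instead establish the single first-variation identity $\tfrac{d}{dt}\big|_{t=0}\mE_k(u+t\phi)=2k\,\mQ_k(\phi,u,\dotsc,u)$ at a general $u$ (your constant-chasing is correct), extend it by polynomiality, and then polarize: setting $F(t_1,\dotsc,t_{2k})=\mE_k(\sum t_iv_i)$ and invoking Schwarz's theorem on mixed partials delivers full symmetry in one stroke. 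Your route is shorter and avoids the inductive bookkeeping; the paper's route makes the mechanism of the induction more explicit and stays closer to the conformal picture (working with $e^{2t\Upsilon}g$ rather than $u+t\phi$). Both arguments are ultimately expressing that $\mQ_k$ is the polarization of the conformal primitive $\mS_k$.
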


\begin{proof}
 \eqref{eqn:Lk_transformation} and~\eqref{eqn:Bk_transformation} imply that the integrals
 \[ \int_X u_1\,L_k(u_2,\dotsc,u_{2k})\dvol_g \quad\text{and}\quad \oint_M u_1\,B_k(u_2,\dotsc,u_{2k})\dvol_{\og} , \]
 respectively, are conformally covariant.  Therefore $\mQ_k$ is conformally covariant.

 To see that $\mQ_k$ is symmetric, first observe that~\eqref{eqn:Lk_to_sigmak} and~\eqref{eqn:Bk_to_sigmak} imply that
 \begin{equation}
  \label{eqn:mQk_to_sigmak}
  \mQ_k(1,\dotsc,1) = \left(\frac{N-2k}{2k}\right)^{2k-1}\left[ \int_X \sigma_k\,\dvol_g + \oint_X H_k\,\dvol_{\og} \right] .
 \end{equation}
 Let $\Upsilon\in C^\infty(X)$ and consider the one-parameter family $g_t:=e^{2t\Upsilon}g$ of pointwise conformally rescaled metrics.  Lemma~\ref{lem:Lk} and Lemma~\ref{lem:Bk} imply
 \begin{align*}
  \left.\frac{\partial}{\partial t}\right|_{t=0}\sigma_k&(g_t)= \\ 
   \left(\frac{N-2k}{2k}\right)^{1-2k}&\left[ -\frac{(2k-1)N+2k}{2k}\Upsilon L_k(1,\dotsc,1) + \frac{(2k-1)(N-2k)}{2k}L_k(\Upsilon,1,\dotsc,1)\right], \\
  \left.\frac{\partial}{\partial t}\right|_{t=0}H_k(g_t) & =\\
   \left(\frac{N-2k}{2k}\right)^{1-2k}&\left[ -\frac{(2k-1)N}{2k}\Upsilon B_k(1,\dotsc,1) + \frac{(2k-1)(N-2k)}{2k}B_k(\Upsilon,1,\dotsc,1)\right],
 \end{align*}
 respectively.  In particular, we obtain that
 \begin{multline*}
  \left(\frac{N-2k}{2k}\right)^{2k-1}\left.\frac{d}{dt}\right|_{t=0}\left(\int_X\sigma_k^{g_t}\,\dvol_{g_t} + \oint_X H_k^{g_t}\,\dvol_{\og_t}\right) \\
  \begin{aligned}
   & = \int_X \left[ \frac{N-2k}{2k}\Upsilon L_k(1,\dotsc,1) + \frac{(2k-1)(N-2k)}{2k}L_k(\Upsilon,1,\dotsc,1) \right]\dvol_g \\
   & \quad + \oint_M \left[ \frac{N-2k}{2k}\Upsilon B_k(1,\dotsc,1) + \frac{(2k-1)(N-2k)}{2k}B_k(\Upsilon,1\dotsc,1) \right]\dvol_{\og} .
  \end{aligned}
 \end{multline*}
 Comparing this to~\eqref{eqn:primitive} yields
 \begin{equation}
  \label{eqn:mQk_symmetric1}
  \mQ_k(\Upsilon,1,\dotsc,1) = \mQ_k(1,\Upsilon,1,\dotsc,1) .
 \end{equation}
 Since $\mQ_k$ is conformally invariant, this holds for all metrics in the conformal class $\kC$ of $g$.

 Now let $u,\Upsilon\in C^\infty(X)$ and again consider the one-parameter family $g_t:=e^{2t\Upsilon}g$ of conformally equivalent metrics.  Define $F\colon\bR\to\bR$ by
 \begin{multline*}
  F(t) = \int_X \left[ L_k^{g_t}(u,1,\dotsc,1)-uL_k^{g_t}(1,\dotsc,1) \right]\dvol_{g_t} \\ + \oint_X \left[ B_k^{g_t}(u,1,\dotsc,1) - u\rv_M B_k^{g_t}(1,\dotsc,1) \right]\dvol_{\og_t} .
 \end{multline*}
 We conclude from~\eqref{eqn:mQk_symmetric1} that $F^\prime(0)=0$.  On the other hand, using~\eqref{eqn:Lk_transformation} and~\eqref{eqn:Bk_transformation} to compute $F^\prime(0)$ as above yields
 \begin{align*}
  F^\prime(0) & = \int_X \Biggl[ \frac{N-2k}{2k}\Upsilon L_k(u,1,\dotsc,1) + \frac{(2k-2)(N-2k)}{2k}L_k(u,\Upsilon,1,\dotsc,1) \\
   & \qquad\qquad - \frac{(2k-1)(N-2k)}{2k}uL_k(\Upsilon,1,\dotsc,1) \\
   & \qquad\qquad + \frac{N-2k}{2k}L_k(\Upsilon u,1,\dotsc,1) - \frac{N-2k}{2k}\Upsilon u L_k(1,\dotsc,1) \Biggr]\dvol_g \\
  & \quad + \oint_M \Biggl[ \frac{N-2k}{2k}\Upsilon\rv_M B_k(u,1,\dotsc,1) + \frac{(2k-2)(N-2k)}{2k}B_k(u,\Upsilon,1,\dotsc,1) \\
   & \qquad\qquad - \frac{(2k-1)(N-2k)}{2k}u\rv_M L_k(\Upsilon,1,\dotsc,1) \\
   & \qquad\qquad + \frac{N-2k}{2k}B_k(\Upsilon u,1,\dotsc,1) - \frac{N-2k}{2k}(\Upsilon u)\rv_M L_k(1,\dotsc,1) \Biggr]\dvol_{\og} .
 \end{align*}
 Using~\eqref{eqn:mQk_symmetric1} to eliminate the third and sixth lines of this computation and then symmetrizing implies that
 \[ \mQ_k(\Upsilon_1,\Upsilon_2,1,\dotsc,1) = \mQ_k(\Upsilon_2,\Upsilon_1,1,\dotsc,1) = \mQ_k(1,\Upsilon_1,\Upsilon_2,1,\dotsc,1) \]
 for all $\Upsilon_1,\Upsilon_2\in C^\infty(X)$.  Continuing in this manner, we deduce that $\mQ_k$ is symmetric.
\end{proof}

For our purposes, the most important consequence of Theorem~\ref{thm:k_boundary_operators} is the following corollary used to prove the convexity of $\mS_k$ in the positive $k$-cone when the boundary value $i^\ast g$ is fixed.

\begin{cor}
 \label{cor:nonnegative2}
 Let $(X^{n+1},g)$ be a compact Riemannian manifold with boundary $M^n=\partial X$.  Fix $k\in\bN$ with $N\neq 2k$; if $k\geq3$, assume further that $g$ is locally conformally flat.  For any $v\in C^\infty(X)$ with $v\rv_M=0$ and any positive $u\in C^\infty(X)$ with $u^{\frac{4k}{N-2k}}g\in\overline{\Gamma_k^+}$, it holds that
 \[ \mQ_k(v,v,u,\dotsc,u) \geq 0 . \]
\end{cor}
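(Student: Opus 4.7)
The strategy proceeds in three steps: a conformal reduction to the case $u\equiv 1$, a polarization computation identifying $L_k(v,1,\dotsc,1)$ via linearization, and an integration by parts exploiting $v\rv_M=0$ together with the positivity properties of $\sigma_k$ and $T_{k-1}$ on $\overline{\Gamma_k^+}$.

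First I would invoke the conformal covariance of $\mQ_k$ from Theorem~\ref{thm:k_boundary_operators}. Taking $\hg=g_u:=u^{4k/(N-2k)}g$, we have $\Upsilon=\frac{2k}{N-2k}\log u$ and hence $e^{\frac{N-2k}{2k}\Upsilon}=u$, so the covariance identity specializes to
\begin{equation*}
 \mQ_k^g(v,v,u,\dotsc,u)=\mQ_k^{g_u}(v/u,v/u,1,\dotsc,1).
\end{equation*}
Since $(v/u)\rv_M=0$ and $g_u\in\overline{\Gamma_k^+}$ by hypothesis, it suffices to prove $\mQ_k(v,v,1,\dotsc,1)\geq 0$ whenever $g\in\overline{\Gamma_k^+}$ and $v\rv_M=0$.

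Next I would compute the polarized expression $L_k(v,1,\dotsc,1)$ by extracting the first-order Taylor coefficient of $L_k(1+tv)$ at $t=0$, using $(2k-1)L_k(v,1,\dotsc,1)=\left.\frac{d}{dt}\right|_{t=0}L_k(1+tv)$. Writing $L_k(u)=\left(\frac{N-2k}{2k}\right)^{2k-1}u^\alpha\sigma_k^{g_u}$ with $\alpha=(2k-1)+\frac{4k^2}{N-2k}$ and applying the conformal linearization of $\sigma_k$ from Lemma~\ref{lem:conformal_variation} with $\dot\Upsilon=\frac{2k}{N-2k}v$, the two contributions proportional to $v\sigma_k$--one from differentiating the weight $u^\alpha$ and one from the conformal variation of $\sigma_k$--combine via the identity $\alpha-\frac{4k^2}{N-2k}=2k-1$ to give
\begin{equation*}
 L_k(v,1,\dotsc,1)=\left(\tfrac{N-2k}{2k}\right)^{2k-1}v\sigma_k-\tfrac{1}{2k-1}\left(\tfrac{N-2k}{2k}\right)^{2k-2}\delta\bigl(T_{k-1}(\nabla v)\bigr).
\end{equation*}
The boundary contribution $\oint_M v\,B_k(v,1,\dotsc,1)\,\dvol_{\og}$ vanishes because $v\rv_M=0$, so $B_k$ need never be computed at all; and integrating the divergence term by parts--again with vanishing boundary contribution because $v\rv_M=0$--produces
\begin{equation*}
 \mQ_k(v,v,1,\dotsc,1)=\left(\tfrac{N-2k}{2k}\right)^{2k-1}\!\!\int_X v^2\sigma_k\,\dvol_g+\tfrac{1}{2k-1}\left(\tfrac{N-2k}{2k}\right)^{2k-2}\!\!\int_X T_{k-1}(\nabla v,\nabla v)\,\dvol_g.
\end{equation*}

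The desired inequality now follows from the two properties of $\overline{\Gamma_k^+}$ recalled in Section~\ref{subsec:bg/sk}: $\sigma_k\geq 0$ and $T_{k-1}$ is positive semidefinite on $\overline{\Gamma_k^+}$. For $N>2k$ both prefactors are manifestly positive and the conclusion is immediate. The main obstacle in executing this plan is the polarization step: one must carefully track how the two first-variation contributions proportional to $v\sigma_k$ combine with exactly the coefficient $2k-1$, so that after dividing by $2k-1$ to extract the polarized form the expression $L_k(v,1,\dotsc,1)$ collapses to the clean sum of a $v\sigma_k$ term and a single divergence, and that the second-order part survives intact as $\delta(T_{k-1}\nabla v)$ whose integration by parts then produces the positive semidefinite quadratic form $T_{k-1}(\nabla v,\nabla v)$.
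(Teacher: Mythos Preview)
Your proof is correct and takes essentially the same route as the paper: conformal reduction to $u\equiv 1$ via Theorem~\ref{thm:k_boundary_operators}, identification of $L_k(v,1,\dotsc,1)$ by linearization (the paper obtains the identical formula~\eqref{eqn:n22} by combining Lemma~\ref{lem:conformal_variation} with Lemma~\ref{lem:Lk}), and then nonnegativity from $\sigma_k\geq 0$ and $T_{k-1}\geq 0$ on $\overline{\Gamma_k^+}$. Your explicit caveat $N>2k$ is honest: the paper's proof, like yours, simply asserts the conclusion at this point without separately addressing the sign when $N<2k$.
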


\begin{proof}
 Theorem~\ref{thm:k_boundary_operators} implies that
 \[ \mQ_k^g(v,v,u,\dotsc,u) = \mQ_k^{g_u}(u^{-1}v,u^{-1}v,1,\dotsc,1), \]
 where $g_u=u^{\frac{4k}{N-2k}}g$.  Since $v\rv_M=0$, we conclude that
 \begin{equation}
  \label{eqn:n21}
  \mQ_k^g(v,v,u,\dotsc,u) = \int_X u^{-1}v L_k^{g_u}(u^{-1}v,1,\dotsc,1)\,\dvol_{g_u} .
 \end{equation}
 On the other hand, Lemma~\ref{lem:conformal_variation} and Lemma~\ref{lem:Lk} imply that
 \begin{multline}
  \label{eqn:n22}
  (2k-1)\left(\frac{N-2k}{2k}\right)^{2-2k}L_k(\Upsilon,1,\dotsc,1) \\ = -\delta\left(T_{k-1}(\nabla\Upsilon)\right) + \frac{(N-2k)(2k-1)}{2k}\sigma_k\Upsilon .
 \end{multline}
 Since $g_u\in\overline{\Gamma_k^+}$, it holds that $\sigma_k^{g_u}\geq0$ and $T_{k-1}^{g_u}\geq0$.  Combining~\eqref{eqn:n21} and~\eqref{eqn:n22} then yields the result.
\end{proof}

\section{Inequality}\label{sec:inequality}

Consider $g_u:=u^{\frac{4k}{N-2k}}g \in [g]$, satisfying $g_{u}\in \Gamma_k^+(\overline{X})$,
and $u=f$ on $M=\partial X$.  Our first goal is to show that we can conformally deform $g_u$ to obtain a metric $\tilde g_u$ with $\sigma_k(\tilde g_u)=0$ and $\tilde g_u\rv_M=g_u\rv_M$.  To that end, recall the following existence result of Bo Guan~\cite{Guan2007} for the Dirichlet problem.

\begin{thm}
 \label{thm:Guan}
 Assume $\psi(x)\in C^\infty (\overline{X}), \psi>0$ and $h_{\alpha \beta}\in [g\rvert_{M}]\cap C^\infty (M)$.  There is a solution $g_u\in C^{1,1}(\oX)\cap \Gamma_k^+(\oX)$ of the boundary value problem
 \begin{equation}
  \begin{cases}
  \sigma_k^{g_u}=\psi, & \text{in $X$}, \\
  g_u=h, & \text{on $M$}, \\
 \end{cases}
 \end{equation}
 provided there exists a metric $g_{\mathrm{sub}}\in C^3(\overline{X})\cap \Gamma_k^+ (\overline{X})$ with
 $\sigma_k^{g_{\mathrm{sub}}}\geq \psi$ on $\overline{X}$, and $g_{\mathrm{sub}}|_{M}=h$. Moreover,
 \begin{equation}
  \label{est}
  \|g_u\|_{C^{1,1}}\leq  C(\|g_{\mathrm{sub}}\|_{C^{3}}, g, n, k) .
 \end{equation}
 Here the norm $\|\cdot \|_{C^{1,1}}$ is with respect to the background metric $g$.
\end{thm}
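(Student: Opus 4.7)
The plan is the standard continuity method combined with a priori $C^{1,1}$ estimates, in the style now familiar for fully nonlinear elliptic Dirichlet problems. I would consider the family of problems $\sigma_k^{g_{u_t}} = \psi_t$ in $X$, $g_{u_t}|_M = h$, with $\psi_t := (1-t)\sigma_k^{g_{\mathrm{sub}}} + t\psi$; since $g_{\mathrm{sub}}$ is a subsolution and $\psi>0$, each $\psi_t$ is positive and $g_{\mathrm{sub}}$ remains a subsolution for every $t\in[0,1]$. At $t=0$ the function $u$ corresponding to $g_{\mathrm{sub}}$ solves the problem, so the set $T:=\{t\in[0,1]:\text{a solution exists in $\Gamma_k^+$}\}$ is nonempty; the goal is to show that $T$ is both open and closed.

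For openness at a solution $u$ with $g_u\in\Gamma_k^+$, the linearization of $v\mapsto\sigma_k^{g_{u+v}}$ is a second-order linear operator whose principal symbol is given by the Newton tensor $T_{k-1}(g_u^{-1}P^{g_u})$. By the basic properties of the G\aa rding cones recalled in Section~\ref{subsec:bg/sk}, this tensor is positive definite on $\Gamma_k^+$, so the linearization is uniformly elliptic. Together with the Dirichlet boundary condition this satisfies the Lopatinskii condition, and Schauder theory plus the implicit function theorem produce a solution for nearby parameters while preserving admissibility.

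Closedness reduces to the a priori bound \eqref{est}. The standard order of estimates is: (i) a $C^0$ bound, with the lower bound coming from $g_{\mathrm{sub}}$ as a barrier via the comparison principle and the upper bound from a maximum principle applied to an appropriate auxiliary function; (ii) interior and boundary $C^1$ bounds, the latter obtained from barriers built out of $g_{\mathrm{sub}}$; and (iii) second derivative bounds, split into the interior estimate via a Pogorelov-type computation and the boundary estimate obtained by differentiating the Dirichlet condition tangentially (for the tangent-tangent and mixed tangent-normal components of $\nabla^2 u$) and a separate barrier argument for the pure normal component. All constants depend only on $\|g_{\mathrm{sub}}\|_{C^3}$, $g$, $n$, $k$. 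A uniform $C^{1,1}$ bound, combined with Arzel\`a--Ascoli and the $C^{1,1}$-closedness of $\overline{\Gamma_k^+}$, then lets us pass to the limit $t\to 1$.

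The main obstacle is the pure normal second derivative bound $\partial_\eta^2 u|_M$, which is the technical heart of this type of result and is precisely where the subsolution hypothesis is used essentially. Concretely, one constructs a suitable barrier from $g_{\mathrm{sub}}$ and combines the concavity of $\sigma_k^{1/k}$ on $\Gamma_k^+$ with the maximum principle applied at boundary points to control $\partial_\eta^2 u|_M$; the fact that the limit solution only lies in $C^{1,1}$ rather than $C^{2,\alpha}$ up to $M$ reflects the borderline nature of this estimate, which is why only $C^{1,1}$ regularity up to the boundary is asserted.
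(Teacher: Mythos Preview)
Your outline is a reasonable sketch of the standard continuity-method argument for this class of Dirichlet problems, and nothing in it is wrong as a high-level plan. However, you should be aware that the paper does not actually prove this theorem at all: it is quoted as a result of Bo Guan, with the accompanying remark that ``while Theorem~\ref{thm:Guan} is not explicitly stated in \cite{Guan2007}, it is well-known to the experts and contained within the proof of~\cite[Theorem~1.1]{Guan2007}.'' So there is no proof in the paper to compare your proposal against; the authors simply cite the literature and move on to use the existence statement and the a priori bound~\eqref{est} as black boxes.

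If your intent is to supply the argument that the paper omits, then your sketch is along the right lines and matches the architecture of Guan's proof: continuity method, openness via ellipticity of the linearization on $\Gamma_k^+$, and closedness via $C^0$, $C^1$, and $C^2$ a priori estimates depending only on the subsolution. The substantive work, as you correctly identify, is the double-normal boundary estimate, and that is exactly the part you have not carried out; what you have written is a description of the strategy rather than a proof. To turn this into an actual argument you would need to reproduce (or cite precisely) the barrier construction and the boundary second-derivative estimate from~\cite{Guan2007}, since those are where the dependence on $\lVert g_{\mathrm{sub}}\rVert_{C^3}$ enters and where the $C^{1,1}$ (rather than $C^{2,\alpha}$) regularity up to the boundary is determined.
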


\begin{remark}
 While Theorem~\ref{thm:Guan} is not explicitly stated in \cite{Guan2007}, it is well-known to the experts and contained within the proof of~\cite[Theorem~1.1]{Guan2007}.
\end{remark}

We now construct the metric $\tilde g_u$ desired above.  Since $g_{u}\in \Gamma_k^+(\overline{X})$, there exists a $\delta>0$, such that
\[ \sigma_k^{g_u} \geq \delta>0 . \]
We can apply Theorem~\ref{thm:Guan} to find a family of solutions $u_j$, $j\in\bN$, to the problems
\[ \begin{cases}
    \sigma_k^{g_{u_j}}=\frac{\delta}{j}, & \text{in $X$}, \\
    u_j=f, & \text{on $M$} .
   \end{cases} \]
Next, using the a priori estimate~\eqref{est}, we deduce the uniform bound
\[ \lVert u_j\rVert_{C^{1,1}}\leq C (\lVert f\rVert_{C^3}, g, n, k). \]
Taking a subsequence if necessary, we conclude that $u_k\to u_\infty$ in $C^{1,1}$ and $u_\infty$ solves
\begin{equation}
 \label{eqn:infty}
 \begin{cases}
  \sigma_k^{g_{u_\infty}}=0, & \text{in $X$}, \\
  u_\infty=f, & \text{on $M$} , \\
 \end{cases}
\end{equation}
Thus $\tilde g_u:=g_{u_\infty}$ is as desired.

Our main goal in this section is to use the convexity of the functional $\mQ_k$ to show that the metric $\tilde g_u$ is the unique solution to~\eqref{eqn:infty}.  Indeed, this will establish the following fully nonlinear Sobolev trace inequality.

\begin{thm}
 \label{thm:inequality}
 Let $g_u\in[g]$ be such that $g_u\in\Gamma_k^+(\overline{X})$ and $u\rv_M=f$ on $M=\partial X$.  Then there exists a unique positive function $u_\infty\in C^{1,1}(\overline{X})$ of~\eqref{eqn:infty}.  Moreover, $u_\infty$ is the minimizer of $\mE_k(u):=\mQ_k(u,\dotsc,u)$ in the class
 \[ \mC_{k,f} := \left\{ u\in C^{1,1}(\overline{X}) \suchthat g_u\in\overline{\Gamma_k^+(\overline{X})}, u\rv_M=f \right\} ; \]
 that is,
 \[ \mE_k(u) \geq \mE_k(u_\infty) \]
 for all $u\in\mC_{k,f}$, with equality if and only if $u=u_\infty$.
\end{thm}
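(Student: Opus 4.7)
Existence of $u_\infty$ is already established in the paragraphs preceding the theorem: Bo Guan's Theorem~\ref{thm:Guan} applied with $\psi = \delta/j$ (using $u$ as a subsolution) yields approximate solutions $u_j$ obeying a uniform $C^{1,1}$-bound, and a limit $u_\infty \in C^{1,1}(\oX)$ solves~\eqref{eqn:infty}. It therefore suffices to establish the Dirichlet principle $\mE_k(u) \geq \mE_k(u_\infty)$ with equality only when $u = u_\infty$; uniqueness of the solution to~\eqref{eqn:infty} then follows automatically, since any other solution would also be a minimizer in $\mC_{k,f}$ and hence coincide with $u_\infty$.

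Following the strategy of~\cite{CaseWang2016} for the $k$-Hessian equation, fix $u \in \mC_{k,f}$, set $v := u - u_\infty$ (so $v\rv_M = 0$), and consider the affine path $u_t := u_\infty + tv$ for $t \in [0,1]$ together with the polynomial $f(t) := \mE_k(u_t)$. Each $u_t$ is positive as a convex combination of positive functions. The symmetry and multilinearity of $\mQ_k$ from Theorem~\ref{thm:k_boundary_operators} give
\[
 f'(t) = 2k\,\mQ_k(v, u_t, \ldots, u_t), \qquad f''(t) = 2k(2k-1)\,\mQ_k(v, v, u_t, \ldots, u_t).
\]
Since $\sigma_k^{g_{u_\infty}} \equiv 0$, the defining formula~\eqref{eqn:Lk_defn} forces $L_k(u_\infty) \equiv 0$; together with $v\rv_M = 0$ this yields $f'(0) = 0$, i.e.\ $u_\infty$ is a critical point of $\mE_k$ in $\mC_{k,f}$. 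Taylor's formula $f(1) - f(0) = \int_0^1 (1-s) f''(s)\,ds$ then reduces the inequality to $f''(s) \geq 0$ on $[0,1]$ and the equality case to $f''(s) \equiv 0$.

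The natural route to $f''(s) \geq 0$ is Corollary~\ref{cor:nonnegative2}, which gives nonnegativity provided $g_{u_s} \in \overline{\Gamma_k^+(\oX)}$ for every $s \in [0,1]$. The hard part will be establishing this cone-preservation along the affine path, since $u \mapsto g_u = u^{\frac{4k}{N-2k}} g_0$ is nonlinear and the convexity of $\Gamma_k^+ \subset \Sym_n$ does not directly imply that $\mC_{k,f}$ is closed under affine combinations of conformal factors. Two viable workarounds are: (i) verify directly via the Schouten transformation~\eqref{eqn:schouten_transformation} that admissibility propagates along $u_s$ from joint cone membership at the endpoints; or (ii) replace $u_\infty$ by the strictly admissible regularizations $u_j$ from Guan's scheme (for which $\sigma_k^{g_{u_j}} = \delta/j > 0$ provides strict ellipticity), prove the inequality along each path $u_j + t(u - u_j)$ by a perturbation argument, and pass to the limit $j \to \infty$ using the $C^{1,1}$ a priori estimate~\eqref{est}. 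Granting $f''(s) \geq 0$, the equality clause $\mE_k(u) = \mE_k(u_\infty)$ forces $f''(s) \equiv 0$; unwinding the integration-by-parts computation from the proof of Corollary~\ref{cor:nonnegative2} (exploiting $v\rv_M = 0$) yields
\[
 T_{k-1}^{g_{u_s}}\bigl(\nabla^{g_{u_s}}(u_s^{-1} v),\, \nabla^{g_{u_s}}(u_s^{-1} v)\bigr) \equiv 0,
\]
and positive definiteness of $T_{k-1}$ on $\Gamma_k^+$ forces $u_s^{-1} v$ to be constant; the boundary condition $v\rv_M = 0$ then gives $v \equiv 0$, i.e.\ $u = u_\infty$.
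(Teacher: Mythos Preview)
Your overall strategy matches the paper's: existence via Guan's approximation scheme, then symmetry and multilinearity of $\mQ_k$ to compute $f'(0)=0$ and $f''(t)\geq 0$ along the affine path, and convexity to conclude that $u_\infty$ minimizes $\mE_k$ on $\mC_{k,f}$.  You are in fact more careful than the paper about the cone-preservation issue: the paper simply asserts that $\mC_{k,f}$ is convex and that $tu+(1-t)u_\infty\in\mC_{k,f}$, without further comment.

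The substantive difference is in the uniqueness/equality argument.  The paper does \emph{not} extract uniqueness from the equality case of the Dirichlet inequality.  Instead it argues directly at the PDE level: given two solutions, it writes them as $e^{2f_0}g$ and $e^{2f_1}g$, sets $f_t=f_0+t(f_1-f_0)$, and observes that $f_1-f_0$ satisfies a linear equation
\[
 L_{ij}(f_1-f_0)_{ij} + (\text{first-order terms}) = 0, \qquad L_{ij}=\int_0^1 T_{k-1}(g^{-1}P_t)_{ij}\,dt,
\]
with $L_{ij}\geq 0$ since $P_t\in\overline{\Gamma_k^+}$.  The comparison principle together with $f_1-f_0=0$ on $M$ gives $f_1=f_0$.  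Uniqueness of the minimizer then follows because any minimizer is a critical point and hence a PDE solution.

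Your variational route to the equality case has a gap.  From $f''(s)\equiv 0$ and the proof of Corollary~\ref{cor:nonnegative2} you correctly get $T_{k-1}^{g_{u_s}}\bigl(\nabla(u_s^{-1}v),\nabla(u_s^{-1}v)\bigr)\equiv 0$, but you then invoke \emph{positive definiteness} of $T_{k-1}$, which holds only on the open cone $\Gamma_k^+$.  On $\overline{\Gamma_k^+}$ the Newton tensor is merely positive semidefinite.  Since $\sigma_k^{g_{u_\infty}}=0$, the endpoint $s=0$ lies on $\partial\Gamma_k^+$, and if $u$ is a second PDE solution then both endpoints do; you have no $s$ at which strict positivity is guaranteed, so you cannot conclude $\nabla(u_s^{-1}v)=0$.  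The paper's comparison-principle argument avoids this because the maximum principle needs only $L_{ij}\geq 0$, not $L_{ij}>0$.
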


\begin{proof}
 The existence of a solution $u_\infty$ to \eqref{eqn:infty} has already been proven.  To prove it is the minimizer, we compute the first and second variations of the energy functional $\mE_k(u)$.  To that end, let $u\in C^\infty(\oX)$ be a positive function such that $g_u=u^{\frac{4k}{N-2k}}g\in\Gamma_k^+(\oX)$, and let $v\in C^\infty(\oX)$ be such that $v\rv_M=0$.  Since $\mQ_k$ is symmetric and $v\rv_M\equiv0$, we compute that
 \begin{equation}
  \begin{split}
  \label{eqn:first_variation}
  \left.\frac{d}{dt}\right|_{t=0}\mE_k(u+tv) & = 2k\mQ_k(v,u,\dotsc,u) \\
   & = 2k\int_X v L_k(u,\dotsc,u)\,\dvol_g,\\
  \end{split}
 \end{equation}
 Similarly, since $\mQ_k$ is symmetric and multilinear, we compute that
 \begin{equation}
  \label{eqn:second_variation}
  \left.\frac{d^2}{dt^2}\right|_{t=0}\mE_k(u+tv) = 2k(2k-1)\mQ_k(v,v,u,\dotsc,u) \geq 0 .
 \end{equation}
 where the inequality follows from Corollary~\ref{cor:nonnegative2}.
%


 Now, \eqref{eqn:first_variation} implies that the solution $u_\infty$ to~\eqref{eqn:infty} is a critical point of the functional $\mE_k\colon\mC_{f,k}\to\bR$.  By~\eqref{eqn:second_variation}, we see that $\mE_k\colon\mC_{f,k}\to\bR$ is a convex functional.  Since $\mC_{f,k}$ is convex, $u_\infty$ minimizes $\mE_k$ in $\mC_{f,k}$.  Indeed, if not, then there is a $u\in\mC_{f,k}$ such that $\mE_k(u)<\mE_k(u_\infty)$.  Since $\mC_{f,k}$ is convex, it holds that $tu+(1-t)u_\infty\in\mC_{f,k}$ for all $t\in[0,1]$.  Denote $\mE_k(t):=\mE_k(tu+(1-t)u_\infty)$.  Since $\mE_k(u)<\mE_k(u_\infty)$, there is a $t^\ast\in[0,1]$ such that $\mE_k^\prime(t^\ast)<0$.  This contradicts the facts that $\mE_k^\prime(0)=0$ and $\mE_k^{\prime\prime}(t)\geq0$ for all $t\in[0,1]$.

 To prove $u_\infty$ is also the unique minimizer. Suppose there are two minimizers $u$ and $v$.  Then $u$ and $v$ are both solutions to \eqref{eqn:infty}.  Let $f_0$ be the function so that $e^{2f_0}= u^{\frac{4k}{N-2k}}$, and $f_1$ be the function so that $e^{2f_1}= v^{\frac{4k}{N-2k}}$.
 Let $f_t= f_0+ t(f_1-f_0)$, $t\in [0,1]$, and $P_{t}= P^{e^{2f_t}g }$.  It is well known that
 \[ P_t= P^g - \nabla^2 f_t+ \nabla f_t \otimes \nabla f_t -\frac{1}{2} |\nabla f_t |^2 g. \]
 Consider the operator
 \begin{equation}
 L_{ij}:= \int_{0}^1 {T_{k-1}} (g^{-1}P_t)_{ij}\,dt .
 \end{equation}
 Since $P_t\in\overline{\Gamma_k^+}$, the operator $L$ is elliptic.  Then $f_1-f_0$ satisfies the elliptic equation
 \[ L_{ij}  (f_1-f_0)_{ij} +   \int_{0}^1 {T_{k-1}} (P_t)_{ij} \left[2 \nabla_i f_t \nabla_j (f_1-f_0)-g( \nabla_k f_t, \nabla_k (f_1-f_0)) g_{ij}\right]dt =0. \]
 The boundary condition in~\eqref{eqn:infty} implies that $f_1-f_0 =0$ on $\partial X$.  By the comparison principle, $f_1=f_0$ on $\overline{X}^{n+1}$.  Thus $u=v$ on $\overline{X}^{n+1}$. This gives the proof of uniqueness.
\end{proof}

\section{An Obata theorem}
\label{sec:obata}

The goal of this section is to give a partial classification of metrics $g\in\overline{\Gamma_k^+}$ in the conformal class of the round metric on the upper hemisphere $S_+^{n+1}$ for which $\sigma_k^g\equiv0$ and $H_k^g$ is constant along the boundary $S^n=\partial S_+^{n+1}$.  These results generalize Escobar's classification of scalar flat metrics on the upper hemisphere for which the boundary has constant mean curvature~\cite{Escobar1988}.  We begin by making a number of useful observations.  For possible future applications, we state these results in the most general geometric situation possible.

First, the assumption $g\in\overline{\Gamma_k^+}$ in the interior of a compact Riemannian manifold implies that the restriction of its Schouten tensor to the boundary is in the closure of the $(k-1)$-cone.  In particular, we have the following useful inequality:

\begin{lem}
 \label{lem:signs}
 Fix $k\in\bN$.  Let $(X^{n+1},g)$ be a compact Riemannian manifold with $g\in\overline{\Gamma_k^+}$.  Given a nonnegative integer $j\leq k-1$, it holds that
 \[ \left\lp \tf T_{j,j},P\rv_{TM}\right\rp \leq 0 \]
 with equality if and only if $\rank P\rv_{TM}\leq j-1$, where
 \[ \tf T_{j,j} := T_j(P\rv_{TM}) - \frac{n-j}{n}\sigma_j(P\rv_{TM})\og . \]
\end{lem}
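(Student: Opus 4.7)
The plan is to reduce the assertion to a classical inequality for elementary symmetric polynomials, applied to the symmetric endomorphism $A := \og^{-1}\,i^\ast P$ of $TM$.  The two standard Newton identities
\[
 \tr(T_j(A)) = (n-j)\sigma_j(A), \qquad \tr(T_j(A)\,A) = (j+1)\sigma_{j+1}(A)
\]
together with the definition of $\tf T_{j,j}$ give, after a direct computation,
\[
 \lp \tf T_{j,j},\,P\rv_{TM}\rp = (j+1)\sigma_{j+1}(A) - \frac{n-j}{n}\sigma_1(A)\,\sigma_j(A).
\]
Thus the lemma reduces to signing this scalar quantity under the cone hypothesis on $A$.

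The hypothesis $g\in\overline{\Gamma_k^+}$ places the full Schouten tensor $P$ in the closure of the $k$-cone in $\bR^{n+1}$ at each point.  Cauchy's interlacing theorem forces the eigenvalues of $P\rv_{TM}$ to interlace those of $P$, and a standard argument shows that this interlacing sends $\overline{\Gamma_k^+}$ on $\bR^{n+1}$ into $\overline{\Gamma_{k-1}^+}$ on $\bR^n$.  Consequently $A\in\overline{\Gamma_{k-1}^+}$, and the normalized elementary symmetric functions $p_m(A) := \sigma_m(A)/\binom{n}{m}$ are nonnegative for $0\leq m\leq k-1$.

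Using the combinatorial identity $(j+1)\binom{n}{j+1} = (n-j)\binom{n}{j}$, the inequality we want becomes exactly
\[
 p_{j+1}(A) \leq p_1(A)\,p_j(A), \qquad 0\leq j\leq k-1.
\]
For $j\leq k-2$ this is a consequence of the Maclaurin chain $p_1\geq p_2^{1/2}\geq\dotsb\geq p_{j+1}^{1/(j+1)}$, valid on $\overline{\Gamma_{j+1}^+}\supset\overline{\Gamma_{k-1}^+}$.  For the endpoint $j=k-1$, where $A$ need not lie in $\overline{\Gamma_k^+}$, I would instead invoke Newton's universal inequality $p_{m-1}\,p_{m+1}\leq p_m^2$: after regularizing $A\rightsquigarrow A+\epsilon I$ so that the intermediate $p_m$ become strictly positive, Newton yields monotonicity $p_{m+1}/p_m$ nonincreasing in $m$, hence $p_k/p_{k-1}\leq p_1/p_0 = p_1$, and $\epsilon\to 0$ closes the gap.

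For the equality characterization, I would trace when equality is attained in the Newton or Maclaurin steps used above.  Newton's inequality is strict unless the eigenvalue distribution is rigid, and combined with the elementary fact that $\sigma_j(A) = \sigma_{j+1}(A) = 0$ whenever $\rank A \leq j-1$, one recovers the ``if'' direction immediately.  I expect the ``only if'' direction to be the main obstacle: Newton--Maclaurin is also formally saturated by scalar matrices $A = \lambda I$, so one has to use the G{\aa}rding cone hypothesis $g\in\overline{\Gamma_k^+}$ carefully to reconcile this case with the stated rank condition and extract the correct rigidity statement.
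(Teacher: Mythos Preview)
Your argument for the inequality is correct and is in substance the paper's own proof: both place $A:=\og^{-1}i^\ast P$ in $\overline{\Gamma_{k-1}^+}$ and then invoke a Newton--Maclaurin inequality. The paper simply cites Viaclovsky for the latter; you spell out the reduction to $p_{j+1}\le p_1p_j$ and treat the endpoint $j=k-1$ by a regularized Newton argument, which is more explicit but not genuinely different. For the cone step the paper quotes Chen's identity $T_j(\eta,\eta)=\sigma_j(P\rv_{TM})$ together with $T_j\ge0$ on $\overline{\Gamma_k^+}$, while you go through Cauchy interlacing; both routes are valid.

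Your hesitation about the equality characterization is well placed and in fact pinpoints a misstatement in the lemma. Take $g$ to be the round metric on $S_+^{n+1}$: then $P=\tfrac12 g\in\Gamma_k^+$, so $A=\tfrac12 I$ and $(j+1)\sigma_{j+1}(A)=\tfrac{n-j}{n}\sigma_1(A)\sigma_j(A)$ holds with equality, yet $\rank A=n$. Thus the ``only if'' direction is false as written; the case $j=0$ is already degenerate, since both sides vanish identically while $\rank A\le-1$ is impossible. The paper's proof does not address this either --- it merely cites the Newton inequalities --- so the defect lies in the statement, not in your approach. The rigidity that Maclaurin actually delivers when $\sigma_j(A)>0$ is that all eigenvalues of $A$ coincide, i.e.\ $\tf P\rv_{TM}=0$, and this weaker conclusion is exactly what the paper invokes downstream in its Obata-type theorem (via the $j=1$ term when $k\ge3$). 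You should not try to force the stated rank condition; it is simply too strong.
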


\begin{proof}
 Since $g\in\overline{\Gamma_k^+}$, it follows that $T_j(\eta,\eta)\geq0$ for all nonnegative integers $j\leq k-1$, where $T_j$ is the $j$-th Newton tensor determined by the Schouten tensor $P$ of $g$ and $\eta$ is the outward-pointing unit normal along $M$.  By~\cite[Lemma~3]{Chen2009s},
 \[ T_j(\eta,\eta) = \sigma_j(P\rv_{TM}) . \]
 In particular, $\og^{-1}P\rv_{TM}\in\overline{\Gamma_{k-1}^+}$.  The conclusion now follows from the Newton inequalities (cf.\ \cite[Lemma~23]{Viaclovsky2000}).
\end{proof}

Second, the expression for $H_k$ simplifies in a useful way for compact Riemannian manifolds with umbilic boundary.

\begin{lem}
 \label{lem:simplify_Hk}
 Fix $k\in\bN$.  Let $(X^{n+1},g)$ be a compact Riemannian manifold with umbilic boundary $M=\partial X$; if $k\geq 2$, assume additionally that $g$ is locally conformally flat.  Then
 \begin{align*}
  H_k = \sum_{j=0}^{k-1} \frac{(n-j)!}{(n+1-k)!(2k-2j-1)!!}H^{2k-2j-1}\sigma_{j,j},
 \end{align*}
 where $\sigma_{j,j}:=\sigma_j(P\rv_{TM})$.
\end{lem}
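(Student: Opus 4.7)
The claim is a direct calculation starting from the definition
\[ H_k = \sum_{j=0}^{k-1} \frac{(2k-j-1)!(N-2k+j)!}{(N-k)!(2k-2j-1)!!\,j!}\,\sigma_{2k-j-1,j}\bigl(\og^{-1}i^\ast P,\og^{-1}A\bigr). \]
Under umbilicity, $A = H\og$, so $\og^{-1}A = H\cdot\Id_{TM}$. In the term indexed by $j$, exactly $2k-2j-1$ factors of $\og^{-1}A$ appear, and each contributes a scalar $H$ together with a Kronecker delta. Thus the whole computation reduces to contracting the generalized Kronecker symbol in the definition
\[ \sigma_{2k-j-1,j}(B,C) = \frac{1}{(2k-j-1)!}\delta_{i_1\dotsb i_{2k-j-1}}^{j_1\dotsb j_{2k-j-1}} B^{i_1}_{j_1}\dotsm B^{i_j}_{j_j}\,C^{i_{j+1}}_{j_{j+1}}\dotsm C^{i_{2k-j-1}}_{j_{2k-j-1}} \]
against the trailing deltas produced by $C^{i}_j = H\delta^i_j$.

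The one combinatorial identity I need is the standard contraction
\[ \delta_{i_1\dotsb i_m}^{j_1\dotsb j_m}\,\delta^{i_{p+1}}_{j_{p+1}}\dotsm\delta^{i_m}_{j_m} = \frac{(n-p)!}{(n-m)!}\,\delta_{i_1\dotsb i_p}^{j_1\dotsb j_p}, \]
valid on an $n$-dimensional space, which I would either quote or verify inductively from the base case $\delta^i_j\delta^j_i = n$. Applying it with $m = 2k-j-1$ and $p = j$ gives
\[ \sigma_{2k-j-1,j}\bigl(\og^{-1}i^\ast P,\,H\,\Id\bigr) = \frac{j!(n-j)!}{(2k-j-1)!(n-2k+j+1)!}\,H^{2k-2j-1}\,\sigma_j(\og^{-1}i^\ast P). \]

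Substituting this back into the definition of $H_k$ and using $N=n+1$ (so $N-2k+j = n-2k+j+1$ and $N-k = n+1-k$), all of the awkward factors cancel precisely against the combinatorial coefficient in front, leaving
\[ H_k = \sum_{j=0}^{k-1}\frac{(n-j)!}{(n+1-k)!(2k-2j-1)!!}\,H^{2k-2j-1}\,\sigma_{j,j}, \]
as claimed, where $\sigma_{j,j} = \sigma_j(\og^{-1}i^\ast P)$ is exactly the notation used in the statement (since $\sigma_{j,j}(B,C)$ has $j$ copies of $B$ and no copies of $C$, so it equals $\sigma_j(B)$).

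The proof is entirely bookkeeping; no geometric input beyond umbilicity is needed, and I would expect the main friction to be only in keeping the factorials straight when performing the Kronecker contraction. The local conformal flatness hypothesis for $k\geq 2$ does not appear to enter this particular calculation, and is presumably kept in the statement for uniformity with other results in the paper (e.g.\ the divergence-freeness of $T_{k-1}$ used elsewhere, or a later intrinsic reformulation of $i^\ast P$ via the Gauss--Codazzi equation~\eqref{eqn:gauss_codazzi}, which under umbilicity and local conformal flatness reduces $i^\ast P$ to $\oP - \tfrac12 H^2\og$).
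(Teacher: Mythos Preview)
Your proposal is correct and follows essentially the same approach as the paper: both use umbilicity to write $A=H\og$, reduce $\sigma_{2k-j-1,j}(\og^{-1}i^\ast P,\og^{-1}A)$ to a multiple of $\sigma_j(\og^{-1}i^\ast P)$ via the Kronecker-delta contraction identity, and substitute into the definition of $H_k$, whereupon the factorials cancel. Your remark that the local conformal flatness hypothesis plays no role in this particular computation is also accurate; the paper's proof likewise makes no use of it.
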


\begin{proof}
 Given nonnegative integers $j\leq s$, define $\sigma_{s,j}:=\sigma_{s,j}(P\rv_{TM},A)$.  Since $M$ is umbilic, $A=H\og$.  In particular, we observe that
 \[ \sigma_{s,j} = \frac{(n-j)!j!}{(n-k)!k!}H^{k-j}\sigma_{j,j} . \]
 Inserting this into the definition of $H_k$ yields the result.
\end{proof}

Third, there is a useful divergence formula which is especially useful for studying metrics for which $H_k$ is constant.  Note that we do not explicitly record the divergence terms in this formula, though they are readily recovered from the proof.

\begin{lem}
 \label{lem:simplify_Tketa}
 Fix $k\in\bN$.  Let $(X^{n+1},g)$ be a compact Riemannian manifold with umbilic boundary $M=\partial X$; if $k\geq 2$, assume additionally that $g$ is locally conformally flat.  Then
 \[ T_k(\eta,\onabla u) + \frac{N-k}{N-1}H_k\oDelta u \cong -\sum_{j=0}^{k-1}\frac{(n-j-1)!}{(n-k)!(2k-2j-1)!!}H^{2k-2j-1}\left\lp \tf T_{j,j}, \onabla^2u \right\rp \]
 for all $u\in C^\infty(M)$, where $\cong$ means that equality holds modulo the image of $\odelta$.
\end{lem}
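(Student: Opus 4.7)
The plan is to reduce $T_k(\eta,\onabla u)$ to tangential data via the Newton recursion, exploit the umbilic and locally conformally flat hypotheses to simplify, and integrate by parts on $M$ to obtain the claimed form.

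First, the Newton identity $(T_k)^i_j = \sigma_k\delta^i_j - P^i_a(T_{k-1})^a_j$, paired with $\eta$ and a tangent vector $X\in TM$, yields $T_k(\eta,X) = -P(\eta,T_{k-1}(X))$. Decomposing $T_{k-1}(X)$ into its tangential projection $\pi T_{k-1}(X)$ and normal component $T_{k-1}(\eta,X)\eta$ (by symmetry of $T_{k-1}$), iterating the recursion, and using the umbilic assumption to simplify the tangent-tangent block of the bulk Newton tensor produce an expression for $T_k(\eta,\cdot)|_{TM}$ as a polynomial combination of $H$, Newton tensors of $P|_{TM}$, and the 1-form $P(\eta,\cdot)|_{TM}$. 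The algebra here mirrors the Kronecker-delta contraction identity used to derive Lemma~\ref{lem:simplify_Hk} for $H_k$.

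Next, I would invoke the Gauss--Codazzi equations. The local conformal flatness of $g$ gives $\Rm = P \wedge g$, and combined with the umbilic Codazzi equation this identifies $P(\eta,\cdot)|_{TM}$ with a multiple of $dH$. Substituting $X=\onabla u$ thus converts each $P(\eta,X)$ factor into $\lp\onabla H,\onabla u\rp$, weighted by polynomials in $H$ and Newton tensors of $P|_{TM}$. I then integrate by parts on $M$. The key input is that $(M,\og)$ is itself locally conformally flat (umbilic submanifolds of LCF manifolds are LCF), so $\odelta T_j(\oP) = 0$ pointwise; combined with the Gauss relation $P|_{TM} = \oP - \tfrac{1}{2}H^2\og$ in the umbilic LCF setting, the divergence of $T_j(P|_{TM})$ is controlled entirely by derivatives of $H$. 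Integration by parts modulo $\odelta$-exact terms then transfers every $\onabla H$ factor onto $\onabla u$ to produce $\onabla^2 u$.

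Finally, I would collect terms. The $\oDelta u$ contributions, once organized by power of $H$, should match $\tfrac{N-k}{N-1}H_k\oDelta u$ via Lemma~\ref{lem:simplify_Hk}, with the trace coefficient $\tfrac{n-j}{n}\sigma_j(P|_{TM})$ appearing naturally from the identity $\tr_{\og} T_j(P|_{TM}) = (n-j)\sigma_j(P|_{TM})$. The remaining non-trace pieces assemble precisely into $\lp\tf T_{j,j},\onabla^2 u\rp$ with the prescribed coefficients.

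The main obstacle will be the combinatorial bookkeeping: tracking how the weights arising from the iterated Newton recursion, the Kronecker-delta contractions relating $(n+1)$- and $n$-dimensional generalized deltas, and the integration-by-parts steps combine exactly into the factor $\frac{(n-j-1)!}{(n-k)!(2k-2j-1)!!}$. A secondary technical point is verifying that each remainder term from integration by parts is genuinely $\odelta$-exact rather than merely of vanishing integral; this follows from the pointwise divergence-freeness of $T_j(\oP)$ on the LCF boundary $(M,\og)$.
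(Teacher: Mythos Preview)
The paper does not actually include a proof of this lemma; the text preceding it says only that the divergence terms ``are readily recovered from the proof,'' and then moves directly to Lemma~\ref{lem:cKilling}. So there is no original argument to compare against, and I can only assess your outline on its own merits.

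Your overall strategy is sound and is essentially what one would expect: reduce $T_k(\eta,\cdot)|_{TM}$ to boundary data, use Codazzi in the umbilic LCF setting to identify $P(\eta,\cdot)|_{TM}$ with $-dH$, and then integrate by parts on $M$ (exploiting that the $T_j(\oP)$ are divergence-free on the LCF boundary) to transfer derivatives of $H$ onto $u$. Two points deserve more care, however. First, your remark that ``the umbilic assumption simplifies the tangent-tangent block of the bulk Newton tensor'' is misleading: the bulk Newton tensor $T_{k-1}(P)$ is built from the ambient Schouten tensor and has nothing to do with $A$ directly. The umbilic hypothesis enters only after you have expressed $T_k(\eta,\cdot)|_{TM}$ in terms of $P|_{TM}$, $P(\eta,\cdot)|_{TM}$, and $P(\eta,\eta)$, at which point the Gauss--Codazzi relations~\eqref{eqn:gauss_codazzi} (with $A_0=0$ and $W=0$) give $P|_{TM}=\oP-\tfrac{1}{2}H^2\og$ and $P(\eta,\cdot)|_{TM}=-dH$. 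Second, the iterated Newton recursion does not reduce $T_k(\eta,X)$ purely to expressions of the form $T_j(P|_{TM})(P(\eta,\cdot),X)$: for $k\geq 3$ there are additional terms involving $P(\eta,\eta)$ and quadratic (or higher) factors of $P(\eta,\cdot)|_{TM}$, and you will need to track these and show they organize correctly. A cleaner bookkeeping device is the divergence identity~\eqref{eqn:divergence_Tqr}, which already packages the relation between $\odelta T_{q,r}$ and $P(\eta,\cdot)$ contractions; specializing to the umbilic case and inverting this relation gives $T_k(\eta,\cdot)|_{TM}$ directly as the divergence of a boundary symmetric two-tensor, from which the lemma follows after splitting off trace parts and matching against Lemma~\ref{lem:simplify_Hk}.
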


Fourth, the presence of conformal Killing fields on the round sphere gives rise to a useful integral identity for metrics with $\sigma_k$ constant.

\begin{lem}
 \label{lem:cKilling}
 Let $(S_+^{n+1},g_0)$ be the round upper hemisphere and fix $k\in\bN$.  Let $g=u^2g_0\in\overline{\Gamma_k^+}$ be a metric such that $\sigma_k(g)=0$.  Then
 \begin{equation}
  \label{eqn:cKilling}
  \oint_{S^n} u\sigma_{k,k} = 0 .
 \end{equation}
\end{lem}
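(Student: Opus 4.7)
The plan is to apply the divergence theorem on $(S_+^{n+1},g)$ to the vector field $T_k^g(X)$, where $X=\nabla^{g_0}\phi$ is the conformal Killing vector field on $(S^{n+1},g_0)$ associated to the ambient height function $\phi := x_{n+2}\rv_{S^{n+1}}$; recall that $(\nabla^{g_0})^2\phi=-\phi\,g_0$. The hypothesis $\sigma_k^g\equiv 0$ will annihilate the interior contribution, and a direct calculation along the equator will reproduce~\eqref{eqn:cKilling}.

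For the interior term: because $g_0$ is locally conformally flat and $g\in[g_0]$, the metric $g$ is locally conformally flat, and hence $T_k^g$ is divergence-free with respect to $g$. Being a conformal Killing field is a conformal invariant, so $X$ is also conformal Killing for $g=u^2 g_0$, say $\tfrac{1}{2}\mathcal L_X g = \lambda\,g$ for some function $\lambda$ whose precise form is irrelevant. Using symmetry of $T_k^g$ to discard the skew part of $\nabla^g X$, one then finds
\[
 \divsymb_g\bigl(T_k^g(X)\bigr) \,=\, \bigl\langle T_k^g,\,\tfrac{1}{2}\mathcal L_X g\bigr\rangle_g \,=\, (n+1-k)\,\lambda\,\sigma_k^g \,\equiv\, 0 .
\]

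For the boundary integrand: along $S^n$ we have $\phi=0$ and $X\rv_{S^n}$ is the inward $g_0$-unit normal to $S_+^{n+1}$. Since $\eta^{g_0} = u\,\eta^g$, this gives $X\rv_{S^n} = -u\,\eta^g$, and combining with the Chen identity $T_k^g(\eta^g,\eta^g) = \sigma_{k,k}$ (recalled in the proof of Lemma~\ref{lem:signs}) yields
\[
 g\bigl(T_k^g(X),\eta^g\bigr)\rv_{S^n} \,=\, -u\,\sigma_{k,k} .
\]
The divergence theorem then produces exactly~\eqref{eqn:cKilling}.

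The main obstacle is regularity: $g\in\overline{\Gamma_k^+}$ is only guaranteed to be $C^{1,1}$, so strictly speaking $T_k^g$ is merely $L^\infty$ and neither the divergence-free identity nor the divergence theorem applies directly to it. I would handle this by approximating $g$ in $C^{1,1}$ by smooth solutions $g_\varepsilon\in\Gamma_k^+$ of $\sigma_k^{g_\varepsilon}=\varepsilon$ with the same boundary values (available by Theorem~\ref{thm:Guan} together with the uniform estimate~\eqref{est}), running the above argument on each $g_\varepsilon$ to obtain $\oint_{S^n} u_\varepsilon\,\sigma_{k,k}^{g_\varepsilon}\,\dvol_{\og_\varepsilon} = O(\varepsilon)$, and then passing to the limit $\varepsilon\to 0$.
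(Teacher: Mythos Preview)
Your proposal is correct and follows essentially the same approach as the paper: both arguments integrate $\divsymb_g\bigl(T_k^g(X)\bigr)$ for the conformal Killing field $X$ coming from the ambient height coordinate, use that $T_k^g$ is divergence-free and that $\tr_g T_k^g=(n+1-k)\sigma_k^g=0$ to kill the interior, and evaluate the boundary term via the Chen identity $T_k(\eta,\eta)=\sigma_{k,k}$. The only cosmetic difference is a sign (you take $X$ to be the inward $g_0$-normal along $S^n$, the paper takes the outward one), and you go further than the paper by explicitly flagging and handling the $C^{1,1}$ regularity issue via approximation through Theorem~\ref{thm:Guan}.
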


\begin{proof}
 Consider $S_+^{n+1}$ as the set
 \[ S_+^{n+1} = \left\{ (x_0,\dotsc,x_{n+1}) \in \bR^{n+2} \suchthat \sum_{i=0}^{n+1}x_i^2=1, x_{n+1}\geq 0 \right\} \]
 with $g_0$ the metric induced by the Euclidean metric in $\bR^{n+2}$.  Denote $S^n=\partial S_+^{n+1}$.  Then the restriction of $X=-\nabla x_{n+1}$ to $S_+^{n+1}$ is a conformal Killing field and $X\rv_{S^n}=\eta_0$, the outward-pointing unit normal along $S^n$ with respect to $g_0$.  It follows that $X\rv_{S^n}=u\eta$, where $\eta$ is the outward-pointing unit normal along $S^n$ with respect to $g$.  Since $X$ is a conformal Killing field and $\sigma_k^g=0$, we have that $\lp\mL_Xg,T_k\rp=0$ for $\mL_Xg$ the Lie derivative of $g$ in the direction $X$.  Integrating this over $S_+^{n+1}$ and using the divergence theorem yields
 \[ 0 = \frac{1}{2}\int_{S_+^{n+1}} \left\lp \mL_Xg, T_k\right\rp = \oint_{S^n} T_k(\eta,X) = \oint_{S^n} u\sigma_{k,k} , \]
 as desired.
\end{proof}

We are now able to prove Theorem~\ref{thm:intro/obata}, which we restate here for the convenience of the reader

\begin{thm}
 \label{thm:obata}
 Let $\Gamma_k^+$ be the positive $k$-cone on the round $(n+1)$-dimensional hemisphere.  Suppose that $g\in\overline{\Gamma_k^+}$ is such that $\sigma_k=0$ and $H_k$ is a positive constant along $M=\partial S_+^{n+1}$.
 Suppose that
 \begin{equation}
  \label{eqn:obata_assumption}
  \sup_M H \leq (k+1)\inf_M H .
 \end{equation}
 Then $g$ is flat.
\end{thm}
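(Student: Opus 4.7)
The plan is to upgrade the integral identities of Lemmas~\ref{lem:simplify_Tketa} and~\ref{lem:cKilling}, together with the sign control of Lemma~\ref{lem:signs}, into a pointwise vanishing of $P\rv_{TM}$ on $M=S^n$, and then to recover flatness in the interior via an Obata-type rigidity on the boundary combined with the conformally flat condition.

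First, since $(S_+^{n+1},g_0)$ is locally conformally flat with totally geodesic boundary, these properties are inherited by every $g\in[g_0]$, so $M=S^n$ is umbilic for $g$ and Lemmas~\ref{lem:signs}--\ref{lem:cKilling} all apply. Writing $g=u^2g_0$, Lemma~\ref{lem:simplify_Hk} yields
\[ H_k = \sum_{j=0}^{k-1}\frac{(n-j)!}{(n+1-k)!(2k-2j-1)!!}H^{2k-2j-1}\sigma_{j,j}, \]
and because $g\in\overline{\Gamma_k^+}$ forces $\og^{-1}P\rv_{TM}\in\overline{\Gamma_{k-1}^+}$, every $\sigma_{j,j}\geq 0$; positivity of the constant $H_k$ then forces $H>0$ throughout $M$. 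Lemma~\ref{lem:cKilling} supplies the vanishing moment identity
\[ \oint_{S^n} u\,\sigma_{k,k}\,\dvol_{\og} = 0. \]

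Next, I would integrate the identity of Lemma~\ref{lem:simplify_Tketa} over the closed manifold $M$ with test function $u$ itself; the contribution $H_k\oDelta u$ drops out because $H_k$ is constant, leaving
\[ \oint_{S^n} T_k(\eta,\onabla u)\,\dvol_{\og}+\sum_{j=0}^{k-1}\frac{(n-j-1)!}{(n-k)!(2k-2j-1)!!}\oint_{S^n} H^{2k-2j-1}\langle\tf T_{j,j},\onabla^2 u\rangle\,\dvol_{\og}=0. \]
Using the conformal transformation of $\oP$ under $\og=u^2g_0\rv_{S^n}$ together with the umbilic, locally conformally flat Gauss equation $i^\ast P=\oP-\tfrac12 H^2\og$, the Hessian $\onabla^2 u$ can be recast as a linear combination of $P\rv_{TM}$, $du\otimes du$, and $|\onabla u|^2\og$. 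The pure-trace and rank-one pieces drop out against $\tf T_{j,j}$, leaving only pairings $\langle\tf T_{j,j},P\rv_{TM}\rangle$, each of which is non-positive by Lemma~\ref{lem:signs}.

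Finally, the pinching $\sup_M H\leq(k+1)\inf_M H$ is what allows a weighted sum of the $H^{2k-2j-1}$ to acquire a definite sign: combined with the non-positivity of the pointwise pairings and the vanishing moment provided by Lemma~\ref{lem:cKilling}, the displayed identity becomes a non-negative quantity equal to zero. Every pairing $\langle\tf T_{j,j},P\rv_{TM}\rangle$ must then vanish, and the equality clause of Lemma~\ref{lem:signs} forces $P\rv_{TM}\equiv 0$ on $M$. Hence $\sigma_{j,j}=0$ for all $j\geq 1$, so Lemma~\ref{lem:simplify_Hk} reduces to $H_k=c_0H^{2k-1}$, forcing $H$ constant, and the Gauss equation gives $\oP=\tfrac12 H^2\og$, exhibiting $(S^n,\og)$ as Einstein of positive constant sectional curvature and hence round by Obata's theorem. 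A locally conformally flat $\sigma_k$-flat filling with this totally umbilic constant-mean-curvature round boundary then extends uniquely to the flat metric on the Euclidean unit ball, completing the proof. The main obstacle is the algebraic bookkeeping in the middle step: showing that the pinching constant $k+1$ arises naturally as the sharp threshold at which the weighted sum of $H^{2k-2j-1}$-terms flips sign, with equality saturated only when $P\rv_{TM}\equiv 0$; this is precisely the step whose non-optimality is reflected in the authors' expectation that~\eqref{eqn:intro/obata_assumption} is not truly necessary.
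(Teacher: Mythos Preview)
Your outline has a genuine gap: it never touches the interior of $S_+^{n+1}$, and without the bulk contribution there is no sign to force equality.  After you integrate Lemma~\ref{lem:simplify_Tketa} over $M$ and substitute $\tf(\onabla^2 u)=-u\,\tf(P\rv_{TM})$ (which is correct, coming from the conformally Einstein relation $\tf(uP+\nabla^2 u)=0$), you arrive at
\[
\oint_{S^n} T_k(\eta,\onabla u)
=\sum_{j=0}^{k-1}\frac{(n-j-1)!}{(n-k)!(2k-2j-1)!!}\oint_{S^n} H^{2k-2j-1}\,u\,\langle \tf T_{j,j},P\rv_{TM}\rangle .
\]
Every summand on the right is already $\leq 0$ by Lemma~\ref{lem:signs} (no pinching is needed for that), so all you have proved is $\oint T_k(\eta,\onabla u)\leq 0$.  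To force equality you would need the opposite inequality, and that is precisely what the paper extracts from the \emph{interior}: one integrates $\langle T_k,\nabla^2 u\rangle$ over $S_+^{n+1}$, uses that $T_k$ is divergence-free and trace-free (since $\sigma_k=0$), and the conformally Einstein relation, to obtain
\[
-(k+1)\int_{S_+^{n+1}} u\,\sigma_{k+1}
=\oint_{S^n} T_k(\nabla u,\eta)
=\oint_{S^n} Hu\,\sigma_{k,k}+\oint_{S^n} T_k(\eta,\onabla u),
\]
the left-hand side being nonnegative by Newton's inequality on $\overline{\Gamma_k^+}\cap\{\sigma_k=0\}$.  This is the missing ingredient.

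Relatedly, your description of how the pinching constant $k+1$ enters is not the actual mechanism.  It does \emph{not} come from ``a weighted sum of the $H^{2k-2j-1}$ acquiring a definite sign''; rather, the $j=k-1$ term of the sum contributes $Hu\bigl(k\sigma_{k,k}-\tfrac{n-k+1}{n}\sigma_{1,1}\sigma_{k-1,k-1}\bigr)$, which combined with the normal piece $Hu\,\sigma_{k,k}$ gives a coefficient $(k+1)$ in front of $Hu\,\sigma_{k,k}$.  One then uses Lemma~\ref{lem:cKilling} to subtract $y\oint u\,\sigma_{k,k}=0$ for a constant $y$ in the window $[\sup H,(k+1)\inf H]$, and it is this subtraction that produces two expressions of opposite sign, forcing equality.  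In your write-up Lemma~\ref{lem:cKilling} is mentioned but never actually used.  Finally, the rigidity conclusion should be $\tf P\rv_{TM}=0$ (hence $\og$ Einstein), not $P\rv_{TM}\equiv 0$; the equality case of Lemma~\ref{lem:signs} for $j\geq 2$ only gives a rank bound.
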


\begin{remark}
 As in the proof of Lemma~\ref{lem:signs}, the assumptions $g\in\overline{\Gamma_k^+}$ and $H_k$ constant yield the estimate
 \[ H^{2k-1} \leq \frac{(n+1-k)!(2k-1)!!}{n!}H_k . \]
 for the supremum of $H$.
\end{remark}

\begin{proof}
 We present the proof for the case $k\geq2$; the case $k=1$ is handled by Escobar~\cite{Escobar1988}, and can be recovered from our argument with minor adjustments.

 Under our assumptions, $T_k$ is divergence-free.  Since $\sigma_k\equiv0$, it also holds that $T_k$ is trace-free.  On the other hand, since $u^{-2}g$ is Einstein, it holds that
 \begin{equation}
  \label{eqn:conformally_einstein}
  \tf(uP+\nabla^2u)=0 ,
 \end{equation}
 where $\tf$ denotes the trace-free part of its argument.  Therefore
 \begin{equation}
  \label{eqn:ibp}
  -(k+1)\int_{S_+^{n+1}} u\sigma_{k+1} = \int_{S_+^{n+1}} \left\lp T_k, \nabla^2u \right\rp = \oint_{S^n} T_k(\nabla u,\eta) .
 \end{equation}
 Since $g\in\overline{\Gamma_k^+}$, Newton's inequalities (cf.\ \cite[Lemma~23]{Viaclovsky2000}) imply that the left-hand side of~\eqref{eqn:ibp} is nonnegative with equality if and only if $P$ has rank at most $k-1$.  On the other hand, the splitting $\nabla u=\onabla u+(\eta u)\eta$ into its tangential and normal components along $S^n$ implies, when combined with Lemma~\ref{lem:simplify_Tketa}, that
 \begin{multline}
  \label{eqn:boundary_eval}
  \oint_{S^n} T_k(\nabla u,\eta) = \oint_{S^n} (\eta u)\sigma_{k,k} \\ - \sum_{j=0}^{k-1}\frac{(n-j-1)!}{(n-k)!(2k-2j-1)!!}\oint_{S^n}H^{2k-2j-1}\left\lp \tf T_{j,j},\onabla^2u \right\rp .
 \end{multline}
 The transformation formula for the mean curvature and the fact that $S^n$ is totally geodesic with respect to $g_0$ implies that $\eta u=Hu$.  Combining this observation with~\eqref{eqn:conformally_einstein}, \eqref{eqn:ibp}, and~\eqref{eqn:boundary_eval} yields
 \[ 0 \leq \oint_{S^n} Hu\sigma_{k,k} + \sum_{j=0}^{k-1}\frac{(n-j-1)!}{(n-k)!(2k-2j-1)!!}\oint_{S^n}H^{2k-2j-1}\left\lp \tf T_{j,j}, uP\right\rp . \]
 In particular, Lemma~\ref{lem:signs} implies that
 \begin{equation}
  \label{eqn:ibp2}
  0 \leq \oint_{S^n} Hu\sigma_{k,k} + \oint_{S^n} Hu\left(k\sigma_{k,k} - \frac{n-k+1}{n}\sigma_{1,1}\sigma_{k-1,k-1}\right) .
 \end{equation}
 Moreover, if $k\geq3$ and equality holds, then $\tf P\rv_{TM}=0$.

 From the assumption~\eqref{eqn:obata_assumption}, we may fix a positive constant $y\in\bR$ such that
 \[ H \leq y \leq (k+1)H . \]
 By Lemma~\ref{lem:cKilling}, it holds that
 \[ \oint_{S^n} Hu\sigma_{k,k} = \oint_{S^n} (H-y)u\sigma_{k,k} . \]
 Since $H\leq y$ and $\sigma_{1,1},\sigma_{k-1,k-1}\geq0$ (see the proof of Lemma~\ref{lem:signs}) we obtain
 \begin{equation}
  \label{eqn:cKilling_application}
  \oint_{S^n} Hu\sigma_{k,k} \leq \oint_{S^n} (H-y)u\left(\sigma_{k,k}-\frac{n-k+1}{nk}\sigma_{1,1}\sigma_{k-1,k-1}\right)
 \end{equation}
 with equality if and only if $(H-y)\sigma_{k-1,k-1}=0$.

 Combining~\eqref{eqn:ibp2} and~\eqref{eqn:cKilling_application} yields
 \begin{equation}
  \label{eqn:conclusion}
  0 \leq \oint_{S^n} \left((k+1)H-y\right)\left(\sigma_{k,k}-\frac{n-k+1}{nk}\sigma_{1,1}\sigma_{k-1,k-1}\right) u .
 \end{equation}
 Since $y\leq(k+1)H$, we conclude from Lemma~\ref{lem:signs} that equality holds in~\eqref{eqn:conclusion}.  If $k\geq3$, the equality case of~\eqref{eqn:ibp2} implies that $\tf P\rv_{TM}=0$.  If $k=2$, we conclude from the equality case of~\eqref{eqn:cKilling_application} that $(H-y)\sigma_{1,1}=0$ almost everywhere, while it is clear from Lemma~\ref{lem:signs} and~\eqref{eqn:conclusion} that $(3H-y)(\sigma_{2,2}-\frac{n-1}{2n}\sigma_{1,1}^2)=0$.  Let
 \[ \mathcal{E} = \left\{ p\in M \suchthat \sigma_{2,2}(p) = \frac{n-1}{2n}\sigma_{1,1}^2(p) \right\} . \]
 Suppose that $p\in S^n\setminus\mathcal{E}$.  Then $3H(p)=y$, and hence $\sigma_{1,1}(p)=0$.  Since
 \[ 3n^2H_2 = 3nH\sigma_{1,1} + H^3 \geq H^3 , \]
 we see that $H$ achieves its maximum at $p$, contradicting $3H(p)=y\geq\sup H$.  Thus $S^n=\mathcal{E}$, and hence, by~\eqref{eqn:cKilling}, we again conclude that $\tf P\rv_{TM}=0$.  Since $S^n\subset S_+^{n+1}$ is umbilic and $\tf P\rv_{TM}=0$, the Gauss--Codazzi equations imply that $g\rv_{S^n}$ is Einstein.  As $g\rv_{S^n}$ is conformal to a round metric, it is itself a round metric.  Since the round metric on $S^n$ is the boundary metric of the flat metric in the $(n+1)$-ball, we conclude from the uniqueness assertion of Theorem~\ref{thm:inequality} that $g$ is flat.
\end{proof}

\bibliographystyle{abbrv}
\bibliography{../bib}
\end{document}